\documentclass[12pt, twoside]{article}
\usepackage{amsmath,amsthm,amssymb}
\usepackage{times}
\usepackage{enumerate}
\usepackage{extarrows}
\usepackage{color}
\usepackage{tabularx}
\usepackage{multirow}

\usepackage {graphics}
\usepackage{graphicx}
\usepackage{url}
\pagestyle{myheadings}
\def\titlerunning#1{\gdef\titrun{#1}}
\makeatletter
\def\author#1{\gdef\autrun{\def\and{\unskip, }#1}\gdef\@author{#1}}
\def\address#1{{\def\and{\\\hspace*{18pt}}\renewcommand{\thefootnote}{}%
		\footnote {#1}}%
	\markboth{\autrun}{\titrun}}
\makeatother
\def\email#1{e-mail: #1}
\def\subjclass#1{{\renewcommand{\thefootnote}{}%
		\footnote{\emph{Mathematics Subject Classification (2010):} #1}}}
\def\keywords#1{\par\medskip
	\noindent\textbf{Keywords.} #1}

\newtheorem{theorem}{Theorem}[section]
\newtheorem{lemma}[theorem]{Lemma}
\newtheorem{definition}[theorem]{Definition}
\newtheorem{proposition}[theorem]{Proposition}
\newtheorem{remark}[theorem]{Remark}

\newtheorem{example}[theorem]{Example}


\newcommand{\R}{\mathbb{R}}

\numberwithin{equation}{section}

\frenchspacing

\textwidth=16cm
\textheight=22cm
\parindent=16pt
\oddsidemargin=-0.5cm
\evensidemargin=-0.5cm
\topmargin=-1cm



\setcounter{tocdepth}{2}

\newcommand{\PreserveBackslash}[1]{\let\temp=\\#1\let\\=\temp}
\newcolumntype{C}[1]{>{\PreserveBackslash\centering}p{#1}}
\newcolumntype{R}[1]{>{\PreserveBackslash\raggedleft}p{#1}}
\newcolumntype{L}[1]{>{\PreserveBackslash\raggedright}p{#1}}

\newcolumntype{I}{!{\vrule width 1pt}}
\newlength\savedwidth

\begin{document}
	
	
	\baselineskip=15pt
	
	

\titlerunning{Stability  of solutions to contact Hamilton-Jacobi equation on the circle}

\title{Stability  of solutions to contact Hamilton-Jacobi equation on the circle}

\author{Yang Xu, Jun  Yan, Kai Zhao}

\date{\today}

\maketitle

\address{Yang Xu: School of Mathematical Sciences, Fudan University, Shanghai 200433, China; \email{xuyang$\_$@fudan.edu.cn}
		\and Jun Yan: School of Mathematical Sciences, Fudan University, Shanghai 200433, China;
		\email{yanjun@fudan.edu.cn}
		\and
		Kai Zhao: School of Mathematical Sciences, Tongji University, Shanghai 200092, China;
		\email{zhaokai93@tongji.edu.cn}}

\subjclass{37J51; 35F21; 35D40}

\begin{abstract}
Combing the weak KAM method for contact Hamiltonian systems and the   theory of viscosity solutions for Hamilton-Jacobi equations, we study the Lyapunov stability and instability of viscosity solutions  for evolutionary contact Hamilton-Jacobi equation  in the first part.  In the second part, we study the existence and multiplicity of time-periodic solutions. 
 
%
\end{abstract}
	\keywords{contact Hamilton-Jacobi equations, stability, periodic viscosity solutions}

\tableofcontents
\section{Introduction and main results}	

\subsection{Assumptions and the motivation of this paper}
%
%
%
In this paper we consider the  contact Hamilton-Jacobi equation
	\begin{equation}\label{eq:HJe}\tag{HJ$_e$}
		\partial_t w(x,t) +  H(x,\partial_x w(x,t), w(x,t))=0, \quad (x,t)\in \mathbb{S} \times [0,+\infty).
	\end{equation}
	Here $\mathbb{S}=[0,1]$ is the unit circle, $w=w(x,t)$ is a function on $\mathbb{S} \times [0,+\infty)$ which
	represents an unknown function, $H$ is a given $C^\infty$ function on $\mathbb{S}\times\R\times\mathbb{R}$ which is the so-called contact Hamiltonian. The functions $w$ and $H$ are scalar functions,  and $\partial_tw$ and $\partial_xw$ denote the derivatives $\partial{w}/\partial{t}$ and $
	\partial{w}/\partial{x}$, respectively.  We assume throughout the following:
\begin{itemize}
\item [\bf(H1)] the Hessian $\frac{\partial^2 H}{\partial p^2} (x,p,u)$ is positive definite for each $(x,p,u)\in \mathbb{S} \times \R  \times\R$;
\item  [\bf(H2)] for each $(x,u)\in \mathbb{S} \times\R$, $H(x,p,u)$ is superlinear in $p$;
\item [\bf(H3)] there is a constant $\kappa >0 $  such that
$$
\Big| \frac{\partial H}{\partial u}(x, p,u)\Big| \leqslant \kappa ,\quad \forall (x,p,u)\in \mathbb{S} \times \R  \times \R.
$$
\end{itemize}
The study of the viscosity solution theory for Hamilton-Jacobi equations has a long history dating back to the pioneering work of Crandall and Lions \cite{CL}. Since then, much progress has been made in this field. Many important works by many authors emerge, such as \cite{Bar,I,T} and the references therein. Our methods and tools used in this paper come from \cite{WWY,WWY1,WWY2,WWY3}. In \cite{WWY}, the authors establish a variational principle for contact Hamiltonian systems under assumptions (H1)-(H3). 
 The dynamical information comes from the stationary contact Hamilton-Jacobi equation 
\begin{equation}\label{eq:HJs}\tag{HJ$_s$}
	H(x,Du(x),u(x))=0, \quad x\in \mathbb{S}.
\end{equation}
Under assumptions (H1)-(H3), the authors \cite{WWY1} introduce two semigroups of operators $\{T_t^-\}_{t\geqslant 0}$ and $\{T_t^+\}_{t\geqslant 0}$.  It  has been  proved  in \cite[Theorem 1.1]{WWY1} that the function  $(x,t)\mapsto T^-_t\varphi(x)$ is the unique viscosity solution (see\cite{CL,L}  for definitions of viscosity solutions)  of the evolutionary Hamilton-Jacobi equation
$$
	\partial_t w(x,t) +  H( x,\partial_x w(x,t), w(x,t) )=0, 
\quad \quad (x,t)\in \mathbb{S} \times [0,+\infty).
$$
satisfying 	$w(x,0)=\varphi(x)$. Here and anywhere, solutions to (HJs) and (HJe) should always be understood in the sense of  viscosity.
\medskip

This paper concerns with the  stability of  solutions   and the existence of time-periodic solutions of contact Hamilton-Jacobi equations. The novelty here is that our research object is 
$$
 \textit{  contact Hamiltonians without monotonicity assumption with respect to $u$.}
$$
In order to describe our result clearly, we first recall the  definition of stability  and some known work closely related to ours. 

\medskip
      Consider the dynamical system $(C( \mathbb{S} ,\mathbb{R}),\{T_t^-\}_{t\geqslant 0})$. The study of this dynamical system involves 
  \begin{itemize}
  	\item[-] the space of continuous functions on $\mathbb{S}  \ (\ C( \mathbb{S} ,\mathbb{R})\ )$,
  	\item[-]  time ( $t\geqslant 0$ parameterizes an irreversible continuous-time process ),
  	\item[-] the time-evolution law $( \{T_t^-\}_{t\geqslant 0} )$.
  \end{itemize}
  A  characteristic feature of dynamical theories is the emphasis on asymptotic behavior, especially in   stability of fixed points and periodic orbits. It is well known that
  \begin{itemize}
	\item[-] $\varphi\in C(\mathbb{S} ,\R)$ is a common fixed point of $\{T^-_t\}_{t\geqslant 0}$ if and only if it is a   solution of the Hamilton-Jacobi equation \eqref{eq:HJs}.
In this case, $\varphi$ is a trivial periodic point of $T^-_t$, or equivalently, it is a trivial periodic viscosity solution of \eqref{eq:HJe}.
\item[-]  $\varphi\in C(\mathbb{S} ,\R)$ is a nontrivial periodic point of $T^-_t$ if and only if the function  $(x,t)\mapsto T^-_t\varphi(x)$ is a nontrivial periodic   solution of \eqref{eq:HJe}.  More precisely,  there is $t_0>0$ such that $T^-_t\varphi=T^-_{t+t_0}\varphi$ for all $t\geqslant 0$. Note that such a $t_0 > 0$ may not  be unique. If the infimum of the set of all such $t_0$'s is not 0, we say that $\varphi$ is a nontrivial periodic point of  $T^-_t$. 
\end{itemize}

\begin{definition}(Stability of solutions)
\begin{itemize}
	\item[(a)] Solution $u$ of \eqref{eq:HJs} is said to be \textbf{Lyapunov stable}, if , for every $\epsilon>0$, there exists a constant $\delta>0 $ such that, if $\varphi\in C(\mathbb{S},\R)$ satisfying $\| \varphi- u \|_\infty<\delta $,  
	then  $\| T_t^-\varphi- u \|_\infty < \epsilon$ for each $t\geqslant 0$.
	\item[(b)] Solution  $u$ of \eqref{eq:HJs} is said to be \textbf{asymptotically stable}, if it is Lyapunov stable and there exists a constant $\delta>0$  such that   $\displaystyle \lim_{t\to +\infty} \| T_t^- \varphi-u \|_\infty=0$ for any  $\varphi\in C(\mathbb{S},\R)$ satisfying $\| \varphi- u \|_\infty<\delta $.
	\item[(c)]  Solution $u$ of \eqref{eq:HJs} is said to be \textbf{Lyapunov unstable}, if   there exists  $\epsilon_0>0$ such that for any $\delta>0$ there exists $\varphi_\delta\in   C(\mathbb{S} ,\R)$, $\| \varphi_\delta-u\|_\infty<\delta$ such that there exists a moment $t_1,t_1>0$ such that $\|T_{t_1}^- \varphi_\delta-u  \|_\infty  \geqslant \epsilon_0 $.
	\end{itemize}
\end{definition}

 We recall some known  work  closely related to ours. Under assumptions  (H1)-(H3),
\begin{itemize}
			\item  Case $\frac{\partial H}{\partial u}=0$, i.e. $H(x,p,u)$ does not depend on $u$. 
			
			There is a unique constant $c(H)$, called effective Hamiltonian \cite{LPV} or Ma\~n\'e's critical value \cite{Ma}, such that equation
			\begin{align}\label{8-991-0}
				H(x,Du(x))=c(H)
			\end{align} 
			admits viscosity solutions.  Then  for any $\varphi\in C(\mathbb{S},\R) $ , the solution $T_t^-\varphi(x)$ of 
			\begin{equation}\label{8-991}
				\partial_t u(x,t)+H(x, \partial_x u(x,t))=0 , \quad (x,t)\in \mathbb{S} \times [0,+\infty)
			\end{equation}	
			 satisfies $T_t^-\varphi+c(H)t $ goes to a viscosity solution of \eqref{8-991-0} as $t\to+\infty$ \cite{Fat-b}. Assume $c(H)=0$, then all the viscosity solutions of  \eqref{8-991-0} are \textbf{Lyapunov stable}. This means equation \eqref{8-991} has no nontrivial time-periodic  viscosity solutions.
			 
			 \medskip
			\item  Case $ \frac{\partial H}{\partial u} > 0$,  i.e. $H(x,p,u)$ is  increasing about $u$. 
			
			Equation \eqref{eq:HJs} admits the unique solution $u_0$. For any $\varphi\in C( ,\R) $,   the solution $T_t^-\varphi(x)$ of   \eqref{eq:HJe} converges to the unique   solution $u_0$ of \eqref{eq:HJs} as $t\to+\infty$. This shows the unique solution   of \eqref{eq:HJs} is  \textbf{asymptotically stable}. Moreover, if there exists $\delta>0$ with $ \, \partial H/\partial u \geqslant \delta>0 $, then
			$$
			\|T_t^-\varphi-u_0  \|\leqslant \| \varphi -u_0  \| \cdot e^{- \delta t}, \quad \forall t\in [0,+\infty). 
			$$
			This means \eqref{eq:HJe}  has no nontrivial time-periodic  viscosity solutions.
			
			\medskip
			\item  Case $\frac{\partial H}{\partial u} < 0$. i.e. $H(x,p,u)$ is  decreasing about $u$. 
		 
			It was proved in \cite{WWY3} that  there exists a $u_+$ (called  the forward weak KAM solution  of \eqref{eq:HJs}) such that
			     \begin{itemize}
			     \item $\displaystyle \min_{x\in } \{ \varphi(x) -u_+ (x)\}>0 \ $ if and only if $\ \displaystyle \lim_{t\to +\infty} T_t^- \varphi(x)=+\infty$ uniformly in $x\in $;
			     \item$\displaystyle \min_{x\in } \{ \varphi(x) -u_+ (x)\}<0\ $ if and only if $ \ \displaystyle \lim_{t\to +\infty} T_t^- \varphi(x)=-\infty$ uniformly in $x\in  $.
			     		\end{itemize}
			    This shows the viscosity solutions of \eqref{eq:HJs} are \textbf{Lyapunov unstable}. Moreover, assume there are constants $\kappa>0$, $\delta >0 $  such that
		$$
		-\kappa\leqslant\frac{\partial H}{\partial u}(x, p,u) \leqslant -\delta <0 ,\quad \forall (x,p,u)\in \mathbb{S}\times\R\times\R,
		$$
		and the following assumption holds.
\begin{itemize}
	\item[\bf(A)] There exists a solution $u_-$ of \eqref{eq:HJs} satisfying
	\begin{align}\label{A}\tag{A}
			\frac{\partial H}{\partial p}(x,p,u)\Big |_{\Lambda_{u_-}}\neq 0,
		\end{align}
\end{itemize}
 where $\Lambda_{u_-}:=\{ (x,(u_-)'(x),u_-(x)) : x \in \mathcal{D}(u_-)\}$ and $\mathcal{D}(u_-) $ denotes the set of differentiable points  of $u_-$. In \cite{WYZ2023}, the authors proved  that there exist infinitely many nontrivial  time-periodic viscosity solutions with different periods of equation \eqref{eq:HJe}.  
		\end{itemize}
\begin{center}
	{\it Can we generalize the above results  from  ``monotonicity" to ``non-monotonicity"?} 
\end{center}	
		
		This is the motivation of the present paper. 

\subsection{Main results}

In this paper, on the one hand, we focus on the stability of solutions without monotonicity assumption. We are concerned about that for a class of initial data the corresponding viscosity solutions converge to asymptotic time periodic viscosity solutions. And we focus on the rate of convergence. 
To be specific, for each viscosity solution $u$ of equation \eqref{eq:HJs}, 
we are concerned with the stability of $u$ and, if stable, the rate at which the viscosity solution $\omega_{\varphi}$ of equation \eqref{eq:HJe} satisfying $\omega_{\varphi}(x,0)=\varphi(x)$ converges to $u$ at infinite time.


Similar with \cite[Lemma 2.1, 2.2]{WYZ2023}, we prove in Appendix   that assumption \eqref{A} implies 
$u_-=:u_0$   is a $C^\infty$ solution of \eqref{eq:HJs}.
Let 
	\begin{equation}\label{eq:def-B(x)}
		B(x):= \frac{\partial H}{\partial p}(x, u'_0(x),u_0(x)),\quad x\in.
	\end{equation}
		Due to assumption \eqref{A}, one can deduce that $B(x)\in C^\infty( ,\R)$ and $B(x)\neq 0$ for all $x\in$. 

\medskip
The first main result of this paper is stated as follows.		
\begin{theorem} \label{thm1}
	Assume (H1)-(H3) and \eqref{A}. Set   
	\begin{equation}\label{eq:def-mu}
	\mu :=\frac{ \int_0^1   \frac{\partial H}{\partial u} (\tau, \,  u_0'(\tau),u_0(\tau) ) \cdot \big( B(\tau )\big)^{-1}  \ d  \tau}{\int_0^1 \big( B(\tau )\big)^{-1} d \tau },
	\end{equation}
	then there exists $\delta_0\in \R^+$ such that	
	\begin{itemize}
		\item[(1)]  If  $\mu>0$, then $u_0$ is asymptotic stable on $\Omega_{\delta_0}:=\{\varphi\in  C( ,\R): u_0-\delta_0 \leqslant \varphi\leqslant  u_0+\delta_0\}$   and 
$$
	\limsup_{t\to +\infty} \frac{\ln \| T_t^- \varphi-u_0 \|_\infty }{t} \leqslant  -\mu, \quad \forall  \varphi \in  \Omega_{\delta_0 }.
	$$
	Moreover, there exists $ \varphi_0\in \Omega_{\delta_0 }$ that makes the above inequality holds.
		 \item[(2)]   If  $\mu<0$, then $u_0$ is  Lyapunov unstable.
	\end{itemize}
%
	\end{theorem}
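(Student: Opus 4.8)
The plan is to reduce the question to analyzing the linearized dynamics along the smooth solution $u_0$. Since $u_0$ is a $C^\infty$ solution of \eqref{eq:HJs} and assumption \eqref{A} guarantees $B(x)\neq 0$, one expects that for initial data $\varphi$ close to $u_0$ the characteristics of \eqref{eq:HJe} stay close to the Lagrangian graph $\Lambda_{u_0}$, so the evolution $T_t^-\varphi$ can be tracked by solving the contact Hamiltonian ODEs along the graph. Concretely, I would parametrize a solution near $u_0$ by writing $w(x,t)=u_0(x)+v(x,t)$ with $v$ small, and linearize: the leading-order equation for $v$ is a transport equation
\[
\partial_t v(x,t) + B(x)\,\partial_x v(x,t) + \frac{\partial H}{\partial u}\big(x,u_0'(x),u_0(x)\big)\, v(x,t) = O(v^2 + v\,\partial_x v),
\]
where $B(x)=\tfrac{\partial H}{\partial p}(x,u_0'(x),u_0(x))$ is the characteristic speed. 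Because $B$ never vanishes on the circle, all characteristics wind around $\mathbb{S}$ with period $T_0:=\int_0^1 B(\tau)^{-1}\,d\tau$ (taking $B>0$ without loss of generality; the sign is constant by continuity and \eqref{A}), and integrating the linear ODE $\dot v = -\tfrac{\partial H}{\partial u}\,v$ along a characteristic over one full loop multiplies $v$ by $\exp\!\big(-\int_0^1 \tfrac{\partial H}{\partial u}(\tau,u_0'(\tau),u_0(\tau))B(\tau)^{-1}\,d\tau\big) = e^{-\mu T_0}$. Thus the time-$T_0$ linearized map is (to leading order) multiplication by $e^{-\mu T_0}$, which is a contraction precisely when $\mu>0$ and an expansion when $\mu<0$. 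This is the heuristic that $\mu$ is the correct Lyapunov exponent, and the two cases of the theorem correspond to $|e^{-\mu T_0}|<1$ versus $>1$.

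For part (1), $\mu>0$: I would first use the comparison principle for \eqref{eq:HJe} together with explicitly constructed sub/supersolutions of the form $u_0 \pm \delta(t)$, where $\delta(t)$ solves (or super-solves) $\dot\delta = (\kappa + o(1))\delta$ locally but, crucially, over the full characteristic period contracts by the factor $e^{-\mu T_0}$; iterating gives $\|T_{nT_0}^-\varphi - u_0\|_\infty \le C e^{-\mu n T_0}\|\varphi-u_0\|_\infty$ for $\varphi$ in a small $C^0$-neighborhood $\Omega_{\delta_0}$. From this Lyapunov stability and then asymptotic stability follow, and taking $\log$ and dividing by $t$ yields $\limsup_{t\to\infty}\tfrac{1}{t}\ln\|T_t^-\varphi-u_0\|_\infty \le -\mu$. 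The finer point is choosing $\delta_0$ small enough that the nonlinear error terms $O(v^2+v\partial_x v)$ — controlled via the semiconcavity/Lipschitz estimates on $T_t^-\varphi$ from \cite{WWY1} — are genuinely lower order on one period; here the compactness of $\mathbb{S}$ and the uniform bound (H3) are what make the estimate uniform. For the sharpness claim, I would exhibit $\varphi_0$ by perturbing $u_0$ in the direction of the (smooth, nonvanishing) leading eigenfunction of the linearized period map, i.e. $\varphi_0 = u_0 + \epsilon\, \psi$ with $\psi$ the solution of the corresponding first-order ODE on $\mathbb{S}$, and check that for this data the decay rate is exactly $-\mu$, not faster, because the linear part dominates.

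For part (2), $\mu<0$: the linearized period map is an expansion, so I would run the same sub/supersolution comparison in reverse — construct data $\varphi_\delta = u_0 + \delta\,\psi$ (again $\psi$ the leading eigenfunction, suitably normalized and sign-definite) with $\|\varphi_\delta - u_0\|_\infty<\delta$, and show by the comparison principle that $T_t^-\varphi_\delta$ stays above a subsolution that grows like $\delta\, e^{-\mu t}$ (recall $-\mu>0$) until it exceeds a fixed $\epsilon_0$; this produces the time $t_1$ required in Definition (c), proving Lyapunov instability. The role of \eqref{A} throughout — beyond ensuring $u_0\in C^\infty$ — is that $B(x)\neq 0$ makes every characteristic recurrent with the same period $T_0$, so there is a single well-defined Floquet multiplier $e^{-\mu T_0}$; without it, characteristics could accumulate on equilibria of the $x$-flow and the analysis would be genuinely different.

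The main obstacle I anticipate is making the linearization rigorous at the level of viscosity solutions: $T_t^-\varphi$ is only Lipschitz (and semiconcave), not $C^1$, so "linearizing the PDE" must be replaced by a careful sub/supersolution construction in which the characteristic ODE is integrated along the Lagrangian graph $\Lambda_{u_0}$ and the $C^0$-error between $T_t^-\varphi$ and the linear flow is controlled uniformly over one period $T_0$ using the regularizing estimates of the Lax–Oleinik-type semigroup from \cite{WWY1,WWY3}. Getting the nonlinear remainder to be uniformly $o(1)\cdot\|v\|$ on $[0,T_0]$, so that the iteration over periods closes, is the technical heart of the argument.
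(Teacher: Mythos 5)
Your heuristic is exactly the one underlying the paper's argument: $\mu$ is the Floquet exponent of the linearized transport equation along the periodic characteristic, and the ``leading eigenfunction'' you call $\psi$ is precisely the weight $\rho(x)=\exp\big\{\int_0^x\big(\mu-\tfrac{\partial H}{\partial u}(\tau,u_0'(\tau),u_0(\tau))\big)B(\tau)^{-1}d\tau\big\}$ of Lemma \ref{0}, which is periodic exactly because of the definition \eqref{eq:def-mu} of $\mu$; both proofs then run on the comparison principle with explicit smooth barriers. However, as written your main stability estimate in part (1) has a genuine gap: you propose $x$-independent barriers $u_0\pm\delta(t)$ that ``grow at rate $\kappa$ locally but contract by $e^{-\mu T_0}$ over a period.'' No such comparison functions exist: without monotonicity of $H$ in $u$, $u_0\pm\delta$ fails to be a super/subsolution wherever $\tfrac{\partial H}{\partial u}<0$, and the comparison principle cannot deliver a contraction ``on average over a period'' unless the averaging is built into the barrier itself. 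That is the whole point of the weight: the correct barriers are $u_0\mp\epsilon\,\rho(x)e^{-\Theta t}$ with $\Theta\in(0,\mu)$, for which a direct Taylor expansion of $H$, using $B\rho'=(\mu-\tfrac{\partial H}{\partial u})\rho$, gives the super/subsolution inequality globally in time once $\epsilon\leqslant\widetilde\epsilon_0(\Theta)\sim c\,|\mu-\Theta|$ (Lemmas \ref{lem2.1}--\ref{lem2.2}). This replaces both your iteration over periods and your appeal to semiconcavity/Lipschitz regularization of $T_t^-\varphi$, which is unnecessary: the barriers are smooth, the quadratic remainder is absorbed by the gap $\mu-\Theta$, and no regularity of the viscosity solution is used. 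Note also that the sharp rate $-\mu$ (rather than $-\Theta$ for each fixed $\Theta<\mu$) requires a further limiting argument in $\Theta$, since $\widetilde\epsilon_0(\Theta)\to0$ as $\Theta\uparrow\mu$; the paper handles this by contradiction, exploiting the explicit form of $\widetilde\epsilon_0(\Theta)$, and your sketch does not address this quantitative point.

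For the sharpness claim and for part (2) your outline does coincide with the paper's: sharpness comes from comparing $\varphi_0=u_0-\widetilde\epsilon_0\rho$ with the supersolution $u_0-\widetilde\epsilon_0\rho\,e^{-\Theta t}$ for $\Theta>\mu$ (Lemma \ref{lem2.4}), and instability for $\mu<0$ from a barrier $u_0-\epsilon\rho\,e^{-\Theta t}$ with $\Theta\in(\mu,0)$. One caveat you should make explicit: in the unstable case this function is a supersolution only on the finite time interval on which $\epsilon e^{-\Theta t}\leqslant\widetilde\epsilon_0$ (Lemma \ref{lem2.3}), because the quadratic error eventually dominates as the perturbation grows; your phrase ``until it exceeds a fixed $\epsilon_0$'' implicitly uses exactly this finite-interval validity, and the instability time $t_1$ must be chosen within it. With the weighted barriers in part (1) and this restriction stated in part (2), your argument becomes essentially the paper's proof.
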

	
	There are some explanations on Theorem \ref{thm1}.
	
	\begin{remark} Notice that we are working on the unit circle $\mathbb{S}$. Assumption \eqref{A} guarantees that there is no fixed points of the contact Hamiltonian flow $\Phi^H_t$ generated by
			\begin{equation}\label{b6}
				\left\{
				\begin{aligned}
					\dot x&=\frac{\partial H }{\partial p}(x,p,u),\\
					\dot p &=-\frac{\partial  H }{\partial x}(x,p,u)-\frac{\partial  H }{\partial u}(x,p,u) \cdot p,  \\ 
					\dot u&=\frac{\partial  H }{\partial p}(x,p,u) \cdot p- H(x,p,u),
				\end{aligned}
				\right.
			\end{equation}
		on $\Lambda_{u_-}$.The above contact Hamiltonian system is the characteristic equations of \eqref{eq:HJe}. This is why we call \eqref{eq:HJe} and \eqref{eq:HJs} contact Hamilton-Jacobi equations. 
Under assumption \eqref{A}, in Appendix, we will prove that: (1) $\Lambda_{u_-}$ coincides with the Aubry set of contact Hamiltonian system \eqref{b6}; (2) $\Lambda_{u_-}$ is a periodic orbit of $\Phi^H_t$ with periodic $\mathcal{T}$; 
(3) equation \eqref{eq:HJs} also has a unique backward KAM solution $u_-$. Moreover, $u_-=u_+=:u_0$, and $u_0$ is of class $C^\infty$.
(4)
	Define that the Aubry set $\mathcal{A} :=\{ x(s),s\in [0, \mathcal{T})\, \}$, and  we will show that  the constant $\mu$  can also be  represented  as
	$$
		\mu =\frac{1}{\mathcal{T} } \int_0^\mathcal{T} \frac{\partial H}{\partial u} \Big( x(s),d_x u_0(x(s)) , u_0(x(s)) \Big) ds , \quad  
		\mathcal{T}=\Big|\int_0^1  \big( B(\tau )\big)^{-1} d\tau \Big| \in \R^+.
		$$
	  \end{remark}

%

In this paper, on the other hand, we focus on the existence of nontrivial  time-periodic viscosity solutions of \eqref{eq:HJe}. It was proved in \cite{WYZ2023} that there exist infinitely many  time-periodic  solutions  with different periods of equation \eqref{eq:HJe}. In this paper, we show that the  monotonicity assumption in \cite{WYZ2023} is not necessary. 

\medskip
The second main result is the following.
\begin{theorem}\label{thm2}
Assume (H1)-(H3) and \eqref{A}. If $\mu<0$, then there exist  infinitely  many  nontrivial time-periodic viscosity solutions   of equation \eqref{eq:HJs}.
\end{theorem}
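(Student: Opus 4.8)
The statement is equivalent to producing, for infinitely many $\tau>0$, a function $\varphi\in C(\mathbb S,\R)$ that is \emph{not} a solution of \eqref{eq:HJs} and satisfies $T^-_\tau\varphi=\varphi$, in such a way that the resulting periodic points $(x,t)\mapsto T^-_t\varphi(x)$ have pairwise distinct minimal periods (solutions with distinct minimal periods being automatically distinct). I would first record the local picture at $u_0$. By the Appendix, $u_0\in C^\infty$, $B\in C^\infty(\mathbb S,\R)$ never vanishes, and $\Lambda_{u_0}$ is a periodic orbit of \eqref{b6} of period $\mathcal T=\big|\int_0^1 B(\tau)^{-1}\,d\tau\big|$ whose $x$-projection covers $\mathbb S$, so the characteristics along $\Lambda_{u_0}$ genuinely rotate. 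Linearising \eqref{eq:HJe} at $u_0$ gives the transport equation $\partial_t v+B(x)\partial_x v+c(x)v=0$ with $c(x)=\frac{\partial H}{\partial u}(x,u_0'(x),u_0(x))$; integrating $c$ along one full loop of the characteristic flow (where $ds=dx/B$) amplifies $v$ by the \emph{constant} factor $e^{-\mu\mathcal T}$, so the time-$\mathcal T$ flow of the linearisation equals the homothety $e^{-\mu\mathcal T}\,\mathrm{Id}$ of $C(\mathbb S,\R)$. Since $\mu<0$ this factor is $>1$: $u_0$ is a repellor for $T^-_{\mathcal T}$, in accordance with Theorem \ref{thm1}(2), and converting this repulsion into closed orbits is the whole game.

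For the construction I would adapt the scheme of \cite{WYZ2023}. Fix a large integer $k$ and $\tau$ near $k\mathcal T$, and put $\mathcal F_\tau:=T^-_\tau$. By the equi-Lipschitz regularising estimate for $T^-_t$, $t>0$, together with the a priori bound from (H3), $\mathcal F_\tau$ maps bounded sets of $C(\mathbb S,\R)$ into precompact ones, hence is a compact continuous map; it fixes $u_0$ and, near $u_0$, pushes points outward by the dilation above. The crux is to exhibit a trapping set: a compact, convex, $\mathcal F_\tau$-invariant order interval $[\underline\psi,\overline\psi]$ containing $u_0$ in its interior, and a small neighbourhood $U$ of $u_0$ on whose boundary $\mathcal F_\tau$ is fixed-point-free with $u_0$ its only interior fixed point. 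Comparing the fixed-point index of $\mathcal F_\tau$ on $[\underline\psi,\overline\psi]$ (equal to $1$, by convexity and positive invariance) with its index on $U$ (which the $>1$ dilation at $u_0$ forces to differ from $1$) and excising yields a fixed point $\varphi_\tau\in[\underline\psi,\overline\psi]\setminus U$, that is, a nontrivial $\tau$-periodic viscosity solution $w_\tau$. Letting $\tau$ range over a suitable unbounded set — the index data and the trapping interval depending locally constantly on $\tau$ — and using that the minimal period of $w_\tau$ divides $\tau$, a pigeonhole step (thinning the admissible $\tau$'s if needed) produces nontrivial periodic solutions with pairwise distinct minimal periods.

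The main obstacle is constructing this trapping set from $\mu<0$ alone rather than from a pointwise sign of $\frac{\partial H}{\partial u}$. In \cite{WYZ2023} the fixed sign yields monotone-in-$u$ comparison and the clean $\pm\infty$ dichotomy of \cite{WWY3}, which confines the relevant orbits at once; here the expansion or contraction caused by $\frac{\partial H}{\partial u}$ is only an average over one loop of the Aubry orbit, so within a single loop the size of $T^-_t\varphi-u_0$ can grow and shrink, and the nonlinear remainder need not have a good sign. One must therefore integrate the effect of $\frac{\partial H}{\partial u}$ along the characteristics over a full period $\mathcal T$ and estimate the remainder quantitatively — using the variational representation of $T^-_t$ from \cite{WWY,WWY1}, the order-preserving property of $T^-_t$, and assumption \eqref{A} (which guarantees the genuine rotation of the characteristics along $\Lambda_{u_0}$) — so as to build sub- and supersolutions $\underline\psi,\overline\psi$ with positively invariant order interval and to rule out fixed points of $\mathcal F_\tau$ near $u_0$ besides $u_0$ itself. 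A further subtlety is that $T^-_t$ is only Lipschitz, not differentiable, at $u_0$, so the index near $u_0$ must be computed directly from the repelling estimate rather than from a Fréchet derivative; the remaining bookkeeping (minimal periods, compactness, the index of a convex invariant set) is routine.
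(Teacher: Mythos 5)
Your proposal correctly identifies the mechanism ($\mu<0$ makes $u_0$ a repellor for the time-$\mathcal T$ map, the average of $\partial H/\partial u$ along one loop of the characteristics being the relevant quantity), but as a proof it has a genuine gap exactly at the point you yourself flag and then defer: the trapping set. For the fixed-point-index scheme you need a bounded, forward-invariant order interval $[\underline\psi,\overline\psi]$ with $u_0$ in its interior, i.e.\ a subsolution \emph{below} $u_0$ and a supersolution \emph{above} $u_0$. But when $\mu<0$ the barriers that the hypotheses actually produce have the opposite orientation: by Lemma \ref{lem2.1}, $u_0+\epsilon\rho$ is a strict subsolution and $u_0-\epsilon\rho$ a strict supersolution for small $\epsilon>0$, so orbits starting above $u_0$ drift upward and orbits starting below drift downward (this is precisely how the paper proves instability in Theorem \ref{thm1}(2)); under (H1)--(H3) and \eqref{A} alone nothing prevents the lower orbits from escaping to $-\infty$, as in the decreasing case of \cite{WWY3}, so the compact convex invariant region you need may simply not exist, and no construction of it is offered. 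Two further steps are also unproved: the index computation at $u_0$ requires that $u_0$ be an isolated fixed point of $T^-_\tau$ with uniformly repelling behaviour on a full neighbourhood (the linearisation heuristic cannot be invoked since $T^-_t$ is not differentiable, and the instability proof only repels data of the special form $u_0-\delta\rho/\|\rho\|_\infty$), and a fixed point of $T^-_\tau$ produced by degree theory could a priori be another \emph{stationary} solution of \eqref{eq:HJs}, i.e.\ a trivial periodic point, which your argument does not exclude.

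For contrast, the paper never needs a two-sided trapping region or any index theory: it builds an explicit $\mathcal T$-periodic \emph{oscillating} subsolution $w(x,t)=u_0(x)+\epsilon\rho(x)\bigl(1+\sin F(x,t)\bigr)$, with phase $F$ determined by the characteristic time $\int_0^x B^{-1}$ (Lemma \ref{lem:thm2-1}); comparison with the implicit action function $h_{x_0,u_0(x_0)}$ then gives $\liminf_{t\to+\infty}h_{x_0,u_0(x_0)}(x_0,t)<\limsup_{t\to+\infty}h_{x_0,u_0(x_0)}(x_0,t)$ (Lemma \ref{lem:4.2}), which at once rules out convergence to a stationary solution, and the cited results of \cite{WYZ2023} (Lemmas \ref{lem:thm2-4} and \ref{lem:thm2-5}) convert this one-sided, genuinely time-oscillating lower bound into a nontrivial $\mathcal T$-periodic solution and then into infinitely many periodic solutions with periods $\mathcal T/n$. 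If you want to salvage your route, the honest statement is that the construction of $\underline\psi,\overline\psi$ and the isolation of $u_0$ are the whole difficulty, and the paper's $\rho$- and $F$-based barriers are precisely the quantitative input you would need to supply.
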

\begin{remark}
 For $\mu>0$, there does not exist  nontrivial time-periodic viscosity solution  of equation \eqref{eq:HJs} in $  \Omega_{\delta_0 } $.	It is not clear that the existence of nontrivial time-periodic viscosity solution in $  \Omega_{\delta_0 } $ with $\mu=0$. 
 
\end{remark}

 The rest of this paper is organized as follows. In Section 2, we give the construction of subsolutions and supersolutions. Section 3 and Section 4 are devoted to the proof of Theorem \ref{thm1} and Theorem \ref{thm2} respectively. Section 5 gives some examples of these two theorems. And we provide some explanations of the statement of these two theorems in the last part.

\section{Construction of subsolutions and supersolutions}
We suggest that readers take a look at the Appendix before going on.

In this section, we give a construction of subsolutions and supersolutions of \eqref{eq:HJs} and \eqref{eq:HJe}, mainly for the purpose of proving Theorem \ref{thm1}.
\begin{lemma} \label{0}
Define $B(x)$ as in \eqref{eq:def-B(x)}  and assume that $u_0:=u_-$  satisfies assumption \eqref{A}. Then the function $\rho: [0,1] \to \R$ defined by
\begin{equation}\label{eq:def-rho}
	\rho(x) =  \exp \Bigg\{ \int_0^x \frac{ \mu - \frac{\partial H}{\partial u} (\tau, \,  u_0'(\tau),u_0(\tau) )}{\quad  B(\tau)    \quad }  d\tau   \Bigg\}
\end{equation}
satisfies $\rho\in C^\infty(,\R )$ and
	$\rho(0)=1=\rho(1)$ ,  $ \rho (x)>0$ for any $x \in  [0,1]$.
\end{lemma}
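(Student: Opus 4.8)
The plan is to verify the three assertions of the lemma in turn; the only point with any content is the identity $\rho(0)=\rho(1)$, which will drop out by simply unwinding the definition \eqref{eq:def-mu} of $\mu$.

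First I would record the regularity inputs that are established in the Appendix and recalled just before the statement: under \eqref{A} the function $u_0:=u_-$ is a $C^\infty$ solution of \eqref{eq:HJs}, and consequently $B(x)=\tfrac{\partial H}{\partial p}(x,u_0'(x),u_0(x))$ belongs to $C^\infty([0,1],\R)$ with $B(x)\neq 0$ for every $x$. Since $[0,1]$, viewed as the circle $\mathbb{S}$, is compact and connected and $B$ is continuous and nowhere zero, $B$ has constant sign; in particular $\int_0^1 B(\tau)^{-1}\,d\tau\neq 0$, so the quotient defining $\mu$ is meaningful. Now set
$$
g(\tau):=\frac{\mu-\frac{\partial H}{\partial u}\big(\tau,u_0'(\tau),u_0(\tau)\big)}{B(\tau)},\qquad \tau\in[0,1].
$$
Because $H\in C^\infty(\mathbb{S}\times\R\times\R)$, $u_0$ and $u_0'$ are $C^\infty$, and $B$ is $C^\infty$ and nonvanishing, we get $g\in C^\infty([0,1],\R)$; hence its primitive $G(x):=\int_0^x g(\tau)\,d\tau$ is $C^\infty$, and therefore $\rho=\exp\circ\,G\in C^\infty([0,1],\R)$. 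Positivity is immediate, since $\rho(x)=e^{G(x)}>0$ for all $x$.

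Next I would check the boundary values. Trivially $G(0)=0$, so $\rho(0)=e^{0}=1$. For $x=1$,
$$
G(1)=\int_0^1 g(\tau)\,d\tau=\mu\int_0^1 B(\tau)^{-1}\,d\tau-\int_0^1 \frac{\partial H}{\partial u}\big(\tau,u_0'(\tau),u_0(\tau)\big)\,B(\tau)^{-1}\,d\tau,
$$
and by the very definition \eqref{eq:def-mu} of $\mu$ the right-hand side is zero; hence $\rho(1)=e^{0}=1$. If one moreover wants $\rho$ to be $C^\infty$ as a function on the circle, observe that $g$ is itself $1$-periodic — $u_0$, $u_0'$, $B$, and the restriction of $\tfrac{\partial H}{\partial u}$ to $\Lambda_{u_-}$ are all periodic — so $\rho'=g\rho$ and inductively $\rho^{(n)}(0)=\rho^{(n)}(1)$, giving the smooth periodicity.

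There is no real obstacle here: the statement is a bookkeeping consequence of the smoothness of $u_0$ (supplied by the Appendix) together with the algebraic fact that $\mu$ was defined precisely so that $\int_0^1 g=0$. The one place to be slightly careful is justifying that $\mu$ is well-defined, i.e. that $\int_0^1 B(\tau)^{-1}\,d\tau\neq 0$, which is exactly where the constant-sign argument for $B$ enters; the rest is the fundamental theorem of calculus and the positivity of $\exp$.
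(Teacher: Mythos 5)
Your proposal is correct and follows essentially the same route as the paper: smoothness of $u_0$ from the Appendix gives $\rho\in C^\infty$, positivity is immediate from the exponential, and $\rho(1)=1$ drops out of the definition \eqref{eq:def-mu} of $\mu$. Your extra remarks (that $B$ has constant sign so $\int_0^1 B(\tau)^{-1}\,d\tau\neq 0$, and that $\rho$ is smoothly periodic on the circle) are welcome details the paper leaves implicit, but they do not change the argument.
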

\begin{proof}
Due to Lemma \ref{lem00} in Appendix, $u_0:=u_-$ is of class $C^\infty$.  Since $H$ is  a $C^\infty$ function on $ \times \R \times \R$, we also have $\rho\in C^\infty(,\R )$.
It is clear that  $ \rho (s)>0$ for any $s \in  [0,1]$. 
	Note that \eqref{eq:def-mu} implies that
\begin{align*}
	\rho(1)=&\, \exp \Bigg\{ \int_0^1 \frac{ \mu - \frac{\partial H}{\partial u} (\tau, \,  u_0'(\tau),u_0(\tau) )}{\quad  B(\tau)    \quad }  d\tau   \Bigg\} \\
	=&\, \exp \Bigg\{ \mu \int_0^1 \big( B(\tau )\big)^{-1} d \tau-  \int_0^1   \frac{\partial H}{\partial u} (\tau, \,  u_0'(\tau),u_0(\tau) ) \cdot \big( B(\tau )\big)^{-1}  \ d  \tau \Bigg\}\\
	=&\, \exp(0)=1=\rho(0).
\end{align*}
\end{proof}

Here the construction of the function $\rho$ is used for the next construction of subsolutions and supersolutions.

Next, let us first construct some subsolutions and supersolutions of the stationary equation \eqref{eq:HJs}.
\begin{lemma} \label{lem2.1}
 Assume $\mu\neq 0$. Set  
	\begin{equation}\label{eq:def-u-epsilon}
	u_\epsilon(x):= u_0(x) -  \frac{\mu}{|\mu|} \cdot \epsilon  \rho(x).
	\end{equation}
	Then there exists $\epsilon_0>0$ such that
	\begin{itemize}
		\item [(1)] For $ \epsilon\in (0,\epsilon_0] $, $u_\epsilon(x)$ is a strict $C^\infty$ subsolution of \eqref{eq:HJs}, i.e. 
	$$
	 H(x, u_\epsilon'(x), u_\epsilon(x)  )<0, \quad \forall x\in \mathbb{S};
	 $$
	    \item [(2)] For $\epsilon\in [-\epsilon_0,0)$, $u_\epsilon(x) $ is a strict $C^\infty$ supersolution of \eqref{eq:HJs}, i.e. 
$$
H(x, u_\epsilon'(x), u_\epsilon(x)  )>0,  \quad \forall x\in \mathbb{S}.
$$
	\end{itemize}
	 \end{lemma}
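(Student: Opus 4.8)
The plan is to treat $H\big(x,u_\epsilon'(x),u_\epsilon(x)\big)$ as a smooth function of the parameter $\epsilon$ near $\epsilon=0$ and to read off its sign from the first-order Taylor term. Write $\sigma:=\mu/|\mu|\in\{\pm1\}$, so that $u_\epsilon=u_0-\sigma\epsilon\rho$ and $u_\epsilon'=u_0'-\sigma\epsilon\rho'$. First note that $u_\epsilon\in C^\infty(\mathbb{S},\R)$, because $u_0\in C^\infty$ (by Lemma \ref{lem00} in the Appendix) and $\rho\in C^\infty$ (by Lemma \ref{0}); and since $u_0$ solves \eqref{eq:HJs}, the zeroth-order term $H(x,u_0'(x),u_0(x))=0$ vanishes identically in $x$.

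Next I would compute the $\epsilon$-derivative at $\epsilon=0$. Using \eqref{eq:def-B(x)},
\[
\frac{d}{d\epsilon}\Big|_{\epsilon=0}H\big(x,u_\epsilon'(x),u_\epsilon(x)\big)=-\sigma\Big(B(x)\rho'(x)+\tfrac{\partial H}{\partial u}\big(x,u_0'(x),u_0(x)\big)\,\rho(x)\Big).
\]
The point of the construction of $\rho$ is precisely that differentiating \eqref{eq:def-rho} gives the linear ODE $B(x)\rho'(x)=\big(\mu-\tfrac{\partial H}{\partial u}(x,u_0'(x),u_0(x))\big)\rho(x)$; substituting this, the bracket collapses to $\mu\rho(x)$, so the derivative equals $-\sigma\mu\rho(x)=-|\mu|\,\rho(x)$. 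Since $\rho$ is continuous and strictly positive on the compact set $\mathbb{S}$, this is bounded above by $-|\mu|\min_{\mathbb{S}}\rho<0$, uniformly in $x$.

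Finally I would apply Taylor's theorem with remainder: for each $x$ there is $\xi$ between $0$ and $\epsilon$ with
\[
H\big(x,u_\epsilon'(x),u_\epsilon(x)\big)=-|\mu|\,\rho(x)\,\epsilon+\tfrac{\epsilon^2}{2}\,\frac{d^2}{d\epsilon^2}\Big|_{\xi}H\big(x,u_\epsilon'(x),u_\epsilon(x)\big).
\]
Because $u_0,\rho$ and their derivatives are bounded on $\mathbb{S}$, the arguments $(x,u_\epsilon'(x),u_\epsilon(x))$ remain in a fixed compact subset of $\mathbb{S}\times\R\times\R$ for $|\epsilon|\le 1$, so the second $\epsilon$-derivative is bounded there by a constant $C$ independent of $x$ and $\epsilon$. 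Choosing $\epsilon_0\in(0,1]$ with $\tfrac{C}{2}\epsilon_0<|\mu|\min_{\mathbb{S}}\rho$ yields, for $0<\epsilon\le\epsilon_0$, $H(x,u_\epsilon',u_\epsilon)\le-\epsilon\big(|\mu|\min_{\mathbb{S}}\rho-\tfrac{C}{2}\epsilon\big)<0$, which is (1); for $-\epsilon_0\le\epsilon<0$ the same estimate with $\epsilon<0$ flips the sign and gives $H(x,u_\epsilon',u_\epsilon)\ge|\epsilon|\big(|\mu|\min_{\mathbb{S}}\rho-\tfrac{C}{2}|\epsilon|\big)>0$, which is (2). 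I do not anticipate a real obstacle here: the only point needing care is the uniformity in $x$ of the remainder bound, handled by compactness of $\mathbb{S}$ and smoothness of $u_0$ and $\rho$; the substantive content is the algebraic cancellation making the linear term exactly $-|\mu|\rho(x)$, which is forced by the defining ODE of $\rho$ (equivalently, by the Fredholm-type solvability condition that fixed the value of $\mu$ in \eqref{eq:def-mu}).
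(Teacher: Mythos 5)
Your proposal is correct and takes essentially the same route as the paper: expand $H(x,u_\epsilon',u_\epsilon)$ around the solution $u_0$, use the defining relation $B(x)\rho'(x)=\big(\mu-\tfrac{\partial H}{\partial u}(x,u_0',u_0)\big)\rho(x)$ from \eqref{eq:def-rho} so that the first-order term is exactly $-|\mu|\rho(x)\,\epsilon$, and absorb the quadratic remainder uniformly in $x$ by compactness for $|\epsilon|$ small. The only difference is packaging: you invoke Taylor's theorem in the scalar parameter $\epsilon$ with a uniform bound on the second $\epsilon$-derivative, whereas the paper writes integral-form remainders and tracks the explicit constants $M_0,M_1,M_2$ and the explicit $\epsilon_0$ (which it reuses in Lemmas \ref{lem2.2}--\ref{lem2.4}); this is not a substantive divergence.
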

\begin{proof}
	We shall first focus on the case $\mu>0$ , and the argument for the case $\mu<0$ is similar, so we omit its proof. For item (1),  set
	\begin{align*}
		\widehat H^\nu_u(x):= &\, \int_0^1 	 \frac{\partial H}{\partial u} \Big(x, \,  u_0'(x) -\nu \rho'(x),  u_0(x) - \tau  \cdot \nu \rho (x) \Big)\ d \tau , \\
 		\widehat H^\nu_{pp}(x):= &\,\int_0^1 s  \int_0^1 	 \frac{\partial^2 H}{\partial p^2} \Big(x, \,  u_0'(x) -  s\tau \cdot  \nu \rho'(x),  u_0(x)  \Big)\ d \tau ds , \\
 			\widehat H^\nu_{uu}(x):= &\,   \int_0^1	 \frac{\partial^2 H}{\partial u^2} \Big(x, \,  u_0'(x) -   \nu  \rho'(x),  u_0(x) -  \tau  \cdot  \nu \rho(x) \Big)\ d \tau   , \\
	\widehat  H^{\nu}_{up}(x):= &\,  \int_0^1	 \frac{\partial^2 H}{\partial u\partial p} \Big(x, \,  u_0'(x) -  \tau \cdot  \nu \rho'(x),  u_0(x) \Big)\ d \tau  ,
	\end{align*}
	and take positive constants $M_0,M_1,M_2,\epsilon_0$ as
\begin{equation}\label{eq:def-M1-M2}
	\begin{split}
			M_0:=&\, \max_{x\in \mathbb{S} , \nu \in [-1,1] } \Big\{  |	\widehat H^\nu_{pp}(x)|, |	\widehat H^\nu_{uu}(x) |,  	\widehat H^{\nu}_{up}(x)|  \Big\},\\
		M_1:=&\, \| \rho  \|_{\infty } + \| \rho'  \|_{\infty }, \quad M_2:= \min_{x\in \mathbb{S}} \rho(x) , \\
		\epsilon_0:=&\, \min\Big\{ \frac{|\mu| M_2 }{2M_0M_1(M_1+M_2)} ,1 \Big\}.
	\end{split}
\end{equation}
	 By $ H\big(x,u'_0(x),u_0(x)\big)=0,$ for any $ x\in \mathbb{S}$, we have
	\begin{equation}\label{eq:pf-lem2.1-step1}
	\begin{split}
	&\, H(x, u_\epsilon'(x), u_\epsilon(x)  )  \\
	=&\, H\Big(x,u'_0(x)-\epsilon \rho'(x),u_0(x)-\epsilon \rho(x)\Big)\\
	=&\,  H\Big(x,u'_0(x),u_0(x)\Big)+ \Big( H(x,u'_0(x)-\epsilon \rho'(x),u_0(x)-\epsilon \rho(x))  - H(x,u'_0(x)-\epsilon \rho'(x),u_0(x)) \Big) \\
	&\, \hspace{1cm} + \Big( H(x,u'_0(x)-\epsilon \rho'(x),u_0(x))- H(x,u'_0(x),u_0(x))  \Big) \\
	=&\,  H\Big(x,u'_0(x),u_0(x)\Big)+ \frac{\partial H}{\partial p} \Big(x, \,  u_0'(x),u_0(x) \Big)  \cdot (-\epsilon) \rho'(x)+\widehat H^\epsilon_{pp} \cdot(\epsilon \rho'(x) )^2 +\widehat H^\epsilon_u \cdot (-\epsilon) \rho(x)\\
	=&\,  -\epsilon \rho'(x)  B(x)+\widehat H^\epsilon _{pp} \cdot(\epsilon \rho'(x) )^2 -\epsilon \widehat H^\epsilon_u \cdot  \rho(x)\\
	=&\,  -\epsilon \mu   \rho(x) -\epsilon \rho(x)\Big(\widehat H^\epsilon_u-   \frac{\partial H}{\partial u} (x, \,  u_0'(x),u_0(x) ) \Big) + \widehat H^\epsilon_{pp} \cdot(\epsilon \rho'(x) )^2,
\end{split}
	\end{equation}
where the last equation is due to 
\begin{equation}\label{eq:rho-prime}
\begin{split}
		\rho'(x)
		=&\, \frac{\rho(x)}{B(x)}\cdot \Big( \mu - \frac{\partial H}{\partial u} (x, \,  u_0'(x),u_0(x) ) \Big ).
\end{split}
\end{equation}
Notice that
\begin{align*}
&\, \Big| \widehat H^\epsilon_u-   \frac{\partial H}{\partial u} (x, \,  u_0'(x),u_0(x) ) \Big| \\
=&\, \Big| \int_0^1 	 \frac{\partial H}{\partial u} \Big(x, \,  u_0'(x) - \epsilon \rho'(x),  u_0(x) - \theta \epsilon \rho (x) \Big)\ d \theta -   \frac{\partial H}{\partial u} (x, \,  u_0'(x),u_0(x) ) \Big| \\
\leqslant &\, \int_0^1  \Big|  \frac{\partial H}{\partial u} \Big(x, \,  u_0'(x) - \epsilon \rho'(x),  u_0(x) - \theta \epsilon \rho (x) \Big)-  \frac{\partial H}{\partial u} \Big(x, \,  u_0'(x) - \epsilon \rho'(x),  u_0(x)  \Big) \Big|  \\  &\, \quad \quad
+ \Big|  \frac{\partial H}{\partial u} \Big(x, \,  u_0'(x) - \epsilon \rho'(x),  u_0(x) \Big)- \frac{\partial H}{\partial u} (x, \,  u_0'(x),u_0(x) )    \Big|    d\theta \\
= &\,    \int_0^1   |\widehat H^{\epsilon \theta}_{uu} \cdot \epsilon \theta \rho(x) |+|\widehat H^{\epsilon}_{up} \cdot \epsilon \rho'(x)| \ d \theta \\
\leqslant &\,  |\epsilon| \cdot  M_0 M_1.
\end{align*}
Thus, for any $\epsilon\in (0,\epsilon_0] $, we have
\begin{align*}
	&\, H(x, u_\epsilon'(x), u_\epsilon(x))\\
=&\,    -\epsilon \mu   \rho(x) -\epsilon \rho(x)\Big(\widehat H^\epsilon_u-   \frac{\partial H}{\partial u} (x, \,  u_0'(x),u_0(x) ) \Big) + \widehat H^\epsilon_{pp} \cdot(\epsilon \rho'(x) )^2 \\
\leqslant &\, -\epsilon \rho(x) \cdot \Big( -\epsilon M_0M_1  +\mu   \Big) + \epsilon^2M_0 M_1^2 \\
\leqslant&\, -\epsilon M_2  \cdot \Big( -\epsilon M_0M_1 - \epsilon  \frac{M_0M_1^2}{M_2}  +\mu   \Big) \\
<&\, 0.
\end{align*}
So far, in view of Lemma \ref{0} and \eqref{eq:def-u-epsilon},  we have that $u_\epsilon(x)$ is a strict $C^\infty$ subsolution of \eqref{eq:HJs}. 

For item(2), it is quite similar with item(1). For any $\epsilon\in [-\epsilon_0,0)$, we can also get that 
\begin{align*}
	&\, H(x, u_\epsilon'(x), u_\epsilon(x))\\
=&\,    -\epsilon \mu   \rho(x) -\epsilon \rho(x)\Big(\widehat H^\epsilon_u-   \frac{\partial H}{\partial u} (x, \,  u_0'(x),u_0(x) ) \Big) + \widehat H^\epsilon_{pp} \cdot(\epsilon \rho'(x) )^2 \\
\geqslant &\, -\epsilon \rho(x) \cdot \Big( \epsilon M_0M_1  +\mu   \Big) -\epsilon^2M_0 M_1^2 \\
\geqslant&\, -\epsilon M_2  \cdot \Big( \epsilon M_0M_1 + \epsilon  \frac{M_0M_1^2}{M_2}  +\mu   \Big) \\
>&\, 0.
\end{align*}
Thus, in this case $u_\epsilon(x)$ is a strict $C^\infty$ supersolution.
\end{proof}
Based on  the construction  of subsolutions and supersolutions of the stationary equation \eqref{eq:HJs} in Lemma \ref{lem2.1}, we further construct some  subsolutions and supersolutions of the evolutionary equation \eqref{eq:HJe} with three cases in the following three lemmas.
\begin{lemma} \label{lem2.2}
 Assume $\mu> 0$. For any given $\Theta \in [0,\mu) $, set  
 \begin{equation}\label{eq:def-w-epsilon-1}
 	w_\epsilon(x,t):= u_0(x) -   \epsilon  \rho(x)e^{- \Theta t } .
 \end{equation}
	Then there exists $\widetilde \epsilon_0(\Theta)>0$ such that
	\begin{itemize}
		\item [(1)] For $ \epsilon\in (0, \widetilde \epsilon_0] $, $w_\epsilon(x,t)$ is a   $C^\infty$ subsolution of \eqref{eq:HJe}, i.e. 
	$$
	\partial_t w_\epsilon(x,t) +  H(x, w_\epsilon'(x,t), w_\epsilon(x,t) )\leqslant 0, \quad \forall (x,t)\in \mathbb{S}\times [0,+\infty).
	 $$
	    \item [(2)] For $\epsilon\in [- \widetilde \epsilon_0,0)$, $w_\epsilon(x) $ is a  $C^\infty$ supersolution of \eqref{eq:HJe}, i.e. 
	$$
	\partial_t w_\epsilon(x,t) +  H(x, w_\epsilon'(x,t), w_\epsilon(x,t) )\geqslant 0, \quad \forall (x,t)\in \mathbb{S}\times [0,+\infty).
	 $$
	\end{itemize}
	 \end{lemma}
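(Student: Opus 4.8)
The plan is to prove Lemma \ref{lem2.2} by a direct computation that mirrors the proof of Lemma \ref{lem2.1}, but now keeping track of the extra term coming from $\partial_t w_\epsilon$. First I would fix $\Theta\in[0,\mu)$ and compute, for the subsolution case $\epsilon>0$,
\begin{align*}
\partial_t w_\epsilon(x,t) + H(x,w_\epsilon'(x,t),w_\epsilon(x,t))
&= \epsilon\Theta\rho(x)e^{-\Theta t} + H\bigl(x,u_0'(x)-\epsilon e^{-\Theta t}\rho'(x),u_0(x)-\epsilon e^{-\Theta t}\rho(x)\bigr).
\end{align*}
Writing $\eta:=\epsilon e^{-\Theta t}\in(0,\epsilon]$, the second summand is exactly $H(x,u_\eta'(x),u_\eta(x))$ in the notation of Lemma \ref{lem2.1} (since $w_\epsilon(\cdot,t)=u_\eta$ with this $\eta$). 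So I would reuse the Taylor expansion already carried out in \eqref{eq:pf-lem2.1-step1}, obtaining
$$
H(x,u_\eta'(x),u_\eta(x)) = -\eta\mu\rho(x) - \eta\rho(x)\Bigl(\widehat H^\eta_u - \tfrac{\partial H}{\partial u}(x,u_0'(x),u_0(x))\Bigr) + \widehat H^\eta_{pp}\cdot(\eta\rho'(x))^2,
$$
and hence the whole expression equals $\eta\Theta\rho(x) - \eta\mu\rho(x) - \eta\rho(x)(\widehat H^\eta_u - \partial_uH) + \widehat H^\eta_{pp}(\eta\rho'(x))^2$. The key gain over Lemma \ref{lem2.1} is that $-\eta\mu\rho(x)+\eta\Theta\rho(x) = -\eta(\mu-\Theta)\rho(x)$ with $\mu-\Theta>0$, so the leading term still has the right sign.

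Next I would choose the threshold. Using the same bounds as in Lemma \ref{lem2.1}, namely $|\widehat H^\eta_u - \partial_uH|\leqslant |\eta| M_0M_1$ and $|\widehat H^\eta_{pp}(\eta\rho'(x))^2|\leqslant \eta^2 M_0M_1^2$, and recalling $\rho(x)\geqslant M_2>0$, I get
$$
\partial_t w_\epsilon + H(x,w_\epsilon',w_\epsilon) \leqslant -\eta M_2\Bigl((\mu-\Theta) - \eta M_0M_1 - \eta\tfrac{M_0M_1^2}{M_2}\Bigr).
$$
Since $0<\eta\leqslant\epsilon$, it suffices to take $\widetilde\epsilon_0(\Theta):=\min\{(\mu-\Theta)M_2/(2M_0M_1(M_1+M_2)),\,1\}$, which makes the bracket positive for every $\eta\in(0,\epsilon]$, so the right-hand side is $\leqslant 0$ for all $(x,t)\in\mathbb{S}\times[0,+\infty)$. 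This proves (1). For (2), with $\epsilon\in[-\widetilde\epsilon_0,0)$ one has $\eta=\epsilon e^{-\Theta t}\in[\epsilon,0)$, the sign of $-\eta(\mu-\Theta)\rho(x)$ flips to positive, the same estimates give a reversed inequality, and the identical choice of $\widetilde\epsilon_0(\Theta)$ yields $\partial_t w_\epsilon + H(x,w_\epsilon',w_\epsilon)\geqslant 0$. Smoothness of $w_\epsilon$ is immediate from $u_0,\rho\in C^\infty$ (Lemmas \ref{lem00}, \ref{0}) and $H\in C^\infty$.

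The only genuinely new point compared with Lemma \ref{lem2.1} is bookkeeping: one must note that $\eta=\epsilon e^{-\Theta t}$ ranges over $(0,\epsilon]$ (resp.\ $[\epsilon,0)$) \emph{uniformly in $t$}, so a single $\widetilde\epsilon_0$ depending on $\Theta$ (not on $t$) works, and that the extra $\partial_t$ term $\eta\Theta\rho(x)$ combines cleanly with $-\eta\mu\rho(x)$ precisely because $\Theta<\mu$ — this is exactly where the hypothesis $\Theta\in[0,\mu)$ is used. I do not anticipate a real obstacle here; the estimate is a routine perturbation argument, and the constants from \eqref{eq:def-M1-M2} can be recycled verbatim after replacing $\mu$ by $\mu-\Theta$. (One should also check the $\nu$-range in $M_0$: since $|\eta|\leqslant\widetilde\epsilon_0\leqslant 1$, all arguments of the form $u_0'(x)-s\tau\nu\rho'(x)$ with $\nu=\eta$ stay within the compact set over which $M_0$ was defined, so no enlargement of $M_0$ is needed.)
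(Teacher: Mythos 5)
Your proof is correct and follows essentially the same route as the paper: the same Taylor expansion as in Lemma \ref{lem2.1} with parameter $\eta=\epsilon e^{-\Theta t}$, the same constants $M_0,M_1,M_2$, the extra $\partial_t$ term absorbed via $\mu-\Theta>0$, and uniformity in $t$ from $e^{-\Theta t}\leqslant 1$; your threshold differs from the paper's \eqref{eq:pf-lem2.2-e0} only by a harmless factor $2$. No gaps.
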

	 \begin{proof}
	 We shall focus on the item (1) as the argument for item (2) is completely similar.
	   Similar with Lemma \ref{lem2.1}, taking
	 	\begin{equation}\label{eq:pf-lem2.2-e0}
	 		\widetilde	\epsilon_0( \Theta) := \min\Big\{ \frac{|\mu -\Theta| M_2 }{M_0M_1(M_1+M_2)}, 1\Big\},
	 	\end{equation}
	 	and combing with \eqref{eq:pf-lem2.1-step1}, one computes that
	 	\begin{align*}
	  &\, 	\partial_t w_\epsilon(x,t) +  H(x, w_\epsilon'(x,t), w_\epsilon(x,t) ) \\
	 			=&\, \Theta\cdot \epsilon e^{-\Theta t} \rho(x) - B(x)  \cdot \epsilon e^{-\Theta t}  \rho'(x)   + \widehat H^{\epsilon e^{-\Theta t} }_{pp} \cdot(\epsilon e^{-\Theta t}\rho'(x) )^2 -\widehat H^{\epsilon e^{-\Theta t} }_u \cdot  \epsilon e^{-\Theta t} \rho(x) \\
	 			=&\,    \Theta\cdot \epsilon e^{-\Theta t} \rho(x)  -\epsilon  \mu e^{-\Theta t} \rho(x) -\epsilon e^{-\Theta t} \rho(x)\Big(\widehat H^{\epsilon e^{-\Theta t}}_u-   \frac{\partial H}{\partial u} (x, \,  u_0'(x),u_0(x) ) \Big) + \widehat H^{\epsilon e^{-\Theta t}}_{pp} \cdot(\epsilon e^{-\Theta t} \rho'(x) )^2 
	 	\end{align*}
	 for any $(x,t)\in \mathbb{S} \times [0,+\infty)$. Since $e^{-\Theta t}\in (0,1]$,  we get
	 $$
	 |\widehat H^{\epsilon e^{-\Theta t} }_{pp} (x) | \leqslant M_0,   \quad  |\widehat H^{\epsilon e^{-\Theta t} }_{uu} (x) | \leqslant M_0 , \quad |\widehat H^{\epsilon e^{-\Theta t} }_{pu} (x) | \leqslant M_0 , \quad \forall (x,t)\in M\times [0,+\infty).
	 $$
	Notice that
	 $$
	 \Big| \widehat H^{\epsilon e^{-\Theta t} }_u-   \frac{\partial H}{\partial u} (x, \,  u_0'(x),u_0(x) ) \Big| \leqslant |\epsilon|e^{-\Theta t} M_0 M_1.
	 $$
	 This follows that for any $\epsilon\in (0, \widetilde \epsilon_0]$, 
\begin{align*}
	&\, \partial_t w_\epsilon (x,t) + H(x, w_\epsilon' (x,t), w_\epsilon (x,t))\\
=&\,    \Theta\cdot \epsilon e^{-\Theta t} \rho(x)  -\epsilon  \mu e^{-\Theta t} \rho(x) -\epsilon e^{-\Theta t} \rho(x)\Big(\widehat H^{\epsilon e^{-\Theta t}}_u-   \frac{\partial H}{\partial u} (x, \,  u_0'(x),u_0(x) ) \Big) \\ 
&\, \quad \quad + \widehat H^{\epsilon e^{-\Theta t}}_{pp} \cdot(\epsilon e^{-\Theta t} \rho'(x) )^2 \\
\leqslant &\, -\epsilon e^{-\Theta t} \rho(x) \cdot \Big( -\epsilon e^{-\Theta t} M_0M_1  +\mu -\Theta \Big) +(\epsilon e^{-\Theta t} )^2M_0M_1^2  \\
\leqslant &\,   -\epsilon e^{-\Theta t} \rho(x)  \cdot \Big( -\epsilon e^{-\Theta t} M_0M_1 - \epsilon e^{-\Theta t}  \frac{M_0M_1^2}{M_2}  +\mu   - \Theta  \Big) \\
\leqslant &\,  -\epsilon e^{-\Theta t} \rho(x)  \cdot \Big( -\epsilon   M_0M_1 - \epsilon    \frac{M_0M_1^2}{M_2}  +\mu   - \Theta  \Big)\\
\leqslant &\, 0.
\end{align*}
Combined with Lemma \ref{0}  and \eqref{eq:def-w-epsilon-1} , it implies that  $w_\epsilon(x,t)$ is a $C^\infty$ subsolution of \eqref{eq:HJe}.

\medskip
For item(2), it is quite similar with item(1), so we omit its proof.

\end{proof}

\begin{lemma} \label{lem2.3}
 Assume $\mu< 0$. For any given $\Theta \in (\mu,0)$, set  
	 \begin{equation}\label{eq:def-w-epsilon-2}
	w_\epsilon(x,t):= u_0(x) -   \epsilon  \rho(x)e^{- \Theta t } .
     \end{equation}
	Then there exists $\widetilde \epsilon_0(\Theta)>0 $  such that
	\begin{itemize}
		\item [(1)] For $ \epsilon\in (0, \widetilde \epsilon_0] $,  $w_\epsilon(x,t)$ is a   $C^\infty$ supersolution of \eqref{eq:HJe}, i.e. 
	$$
	\partial_t w_\epsilon(x,t) +  H(x, w_\epsilon'(x,t), w_\epsilon(x,t) )\geqslant 0, \quad \forall (x,t)\in \mathbb{S}\times \left[0, \frac{\ln 	\widetilde	\epsilon_0 -\ln |\epsilon| }{-\Theta} \right].
	 $$
	    \item [(2)] For $\epsilon\in [- \widetilde \epsilon_0,0)$,  $w_\epsilon(x) $ is a  $C^\infty$ subsolution of \eqref{eq:HJe}, i.e. 
	$$
	\partial_t w_\epsilon(x,t) +  H(x, w_\epsilon'(x,t), w_\epsilon(x,t) )\leqslant 0, \quad \forall (x,t)\in \mathbb{S}\times \left[0, \frac{\ln 	\widetilde	\epsilon_0 -\ln |\epsilon| }{-\Theta} \right].
	 $$
	\end{itemize}
	 \end{lemma}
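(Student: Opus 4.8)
The plan is to repeat, essentially verbatim, the computation in the proof of Lemma \ref{lem2.2}, the only structural difference being that now $-\Theta>0$, so the factor $e^{-\Theta t}$ \emph{increases} in $t$ and the perturbation $u_0-w_\epsilon=\epsilon\rho(x)e^{-\Theta t}$ is no longer uniformly small. One can therefore only keep the differential inequality while $|\epsilon|e^{-\Theta t}\le\widetilde\epsilon_0$, i.e. while $t\le(\ln\widetilde\epsilon_0-\ln|\epsilon|)/(-\Theta)$, which is exactly the time interval in the statement. Concretely, I would set
$$\widetilde\epsilon_0(\Theta):=\min\Big\{\frac{|\mu-\Theta|M_2}{M_0M_1(M_1+M_2)},\,1\Big\},$$
with $M_0,M_1,M_2$ as in \eqref{eq:def-M1-M2} --- the same formula as in \eqref{eq:pf-lem2.2-e0}, noting that here $|\mu-\Theta|=\Theta-\mu>0$ because $\Theta\in(\mu,0)$.

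First I would plug $w_\epsilon$ into \eqref{eq:HJe}, use $H(x,u_0'(x),u_0(x))=0$ and the Taylor expansion already recorded in \eqref{eq:pf-lem2.1-step1} (with the auxiliary integrals $\widehat H^\nu_{pp},\widehat H^\nu_u,\dots$ evaluated at $\nu=\epsilon e^{-\Theta t}$), together with the identity \eqref{eq:rho-prime} for $\rho'$, to obtain
$$\partial_t w_\epsilon+H(x,w_\epsilon',w_\epsilon)=-\epsilon e^{-\Theta t}\rho(x)\Big((\mu-\Theta)+\widehat H^{\epsilon e^{-\Theta t}}_u-\tfrac{\partial H}{\partial u}(x,u_0'(x),u_0(x))\Big)+\widehat H^{\epsilon e^{-\Theta t}}_{pp}\cdot(\epsilon e^{-\Theta t}\rho'(x))^2,$$
valid for all $(x,t)\in\mathbb{S}\times[0,+\infty)$. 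On the restricted time interval $|\epsilon|e^{-\Theta t}\le\widetilde\epsilon_0\le1$, so exactly as in Lemma \ref{lem2.2} the second-order quantities are bounded by $M_0$ and $|\widehat H^{\epsilon e^{-\Theta t}}_u-\frac{\partial H}{\partial u}(x,u_0'(x),u_0(x))|\le|\epsilon|e^{-\Theta t}M_0M_1$.

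Then I would read off the sign. For $\epsilon\in(0,\widetilde\epsilon_0]$ the prefactor $-\epsilon e^{-\Theta t}\rho(x)$ is negative while $\mu-\Theta<0$, so the leading term is positive, and the two estimates above give
$$\partial_t w_\epsilon+H\ \geqslant\ -\epsilon e^{-\Theta t}\rho(x)\Big((\mu-\Theta)+|\epsilon|e^{-\Theta t}M_0M_1+|\epsilon|e^{-\Theta t}\tfrac{M_0M_1^2}{M_2}\Big).$$
The choice of $\widetilde\epsilon_0$ forces the bracket to be $\le0$ on the prescribed time interval (since $|\epsilon|e^{-\Theta t}M_0M_1(1+M_1/M_2)\le\widetilde\epsilon_0M_0M_1(M_1+M_2)/M_2\le\Theta-\mu$), hence $\partial_t w_\epsilon+H\geqslant0$, which is item (1). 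For $\epsilon\in[-\widetilde\epsilon_0,0)$ every sign reverses and the same chain gives $\partial_t w_\epsilon+H\leqslant0$, which is item (2). Together with Lemma \ref{0} (positivity and smoothness of $\rho$) and \eqref{eq:def-w-epsilon-2} this completes the proof.

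I do not expect a genuine obstacle here: the differential inequality is produced by the identical Taylor computation as in Lemmas \ref{lem2.1}--\ref{lem2.2}, and the only new point is the bookkeeping of the time window on which $|\epsilon|e^{-\Theta t}$ stays below the threshold $\widetilde\epsilon_0$, together with some care about signs since now both $\mu$ and $\Theta$ are negative.
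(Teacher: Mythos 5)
Your proposal is correct and follows essentially the same route as the paper: the same choice of $\widetilde\epsilon_0(\Theta)$, the same observation that $|\epsilon|e^{-\Theta t}\leqslant\widetilde\epsilon_0$ exactly on the stated time window, and the same Taylor expansion and bounds from Lemmas \ref{lem2.1}--\ref{lem2.2} to control the sign of the bracket. No gaps.
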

	 \begin{proof}
      By similarity, we are only proving  item (1) .
	 	Similar with Lemma \ref{lem2.2}, by taking
	 	$$
	 		\widetilde	\epsilon_0( \Theta) :=  \min\Big\{\frac{|\Theta -\mu| M_2 }{M_0M_1(M_1+M_2)}, 1 \Big\}, \quad t_0 := \frac{\ln 	\widetilde	\epsilon_0 -\ln |\epsilon| }{-\Theta},
	 	$$
	 	it follows that $\epsilon e^{-\Theta t}\leqslant  	\widetilde	\epsilon_0 $ for any $t\in [0,t_0]$, we get that
 \begin{align*}
	&\, \partial_t w_\epsilon (x,t) + H(x, w_\epsilon' (x,t), w_\epsilon (x,t))\\
=&\,    \Theta\cdot \epsilon e^{-\Theta t} \rho(x)  -\epsilon  \mu e^{-\Theta t} \rho(x) -\epsilon e^{-\Theta t} \rho(x)\Big(\widehat H^{\epsilon e^{-\Theta t}}_u-   \frac{\partial H}{\partial u} (x, \,  u_0'(x),u_0(x) ) \Big) \\ 
&\, + \widehat H^{\epsilon e^{-\Theta t}}_{pp} \cdot(\epsilon e^{-\Theta t} \rho'(x) )^2 \\
\geqslant &\, -\epsilon e^{-\Theta t} \rho(x) \cdot \Big( \epsilon e^{-\Theta t} M_0M_1  +\mu -\Theta \Big) -(\epsilon e^{-\Theta t} )^2M_0M_1^2  \\
\geqslant &\,   -\epsilon e^{-\Theta t} \rho(x)  \cdot \Big( \epsilon e^{-\Theta t} M_0M_1 +\epsilon e^{-\Theta t}  \frac{M_0M_1^2}{M_2}  +\mu   - \Theta  \Big) \\
\geqslant &\,  -\epsilon e^{-\Theta t} \rho(x)  \cdot \Big( 	\widetilde	\epsilon_0   M_0M_1 + \	\widetilde	\epsilon_0   \frac{M_0M_1^2}{M_2}  +\mu   - \Theta  \Big)\\
\geqslant &\, 0.
\end{align*}
So far, due to Lemma \ref{0}  and \eqref{eq:def-w-epsilon-2}, we get that  $w_\epsilon(x,t)$ is a   $C^\infty$ supersolution of \eqref{eq:HJe}.
	 \end{proof}
	 
\begin{lemma}\label{lem2.4}
	 Assume $\mu> 0$. For any given $\Theta \in  (\mu,+\infty) $, set  
 \begin{equation}\label{eq:def-w-epsilon-3}
	w_\epsilon(x,t):= u_0(x) -   \epsilon  \rho(x)e^{- \Theta t } .
 \end{equation}
	Then there exists $\widetilde \epsilon_0(\Theta)>0$ such that
	\begin{itemize}
		\item [(1)] For $ \epsilon\in (0, \widetilde \epsilon_0] $, $w_\epsilon(x,t)$ is a   $C^\infty$ supersolution of \eqref{eq:HJe}, i.e. 
	$$
	\partial_t w_\epsilon(x,t) +  H(x, w_\epsilon'(x,t), w_\epsilon(x,t) )\geqslant 0, \quad \forall (x,t)\in \mathbb{S}\times [0,+\infty).
	 $$
	    \item [(2)] For $\epsilon\in [- \widetilde \epsilon_0,0)$, $w_\epsilon(x) $ is a  $C^\infty$ subsolution of \eqref{eq:HJe}, i.e. 
	$$
	\partial_t w_\epsilon(x,t) +  H(x, w_\epsilon'(x,t), w_\epsilon(x,t) )\leqslant 0, \quad \forall (x,t)\in \mathbb{S}\times [0,+\infty).
	 $$
	\end{itemize}
\end{lemma}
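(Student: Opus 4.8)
The plan is to mimic closely the proof of Lemma~\ref{lem2.2}, since the algebraic identity for $\partial_t w_\epsilon + H(x,w_\epsilon',w_\epsilon)$ is literally the same; only the sign bookkeeping and the admissible range of $\epsilon$ change because now $\Theta>\mu>0$, so the quantity $\mu-\Theta$ is negative rather than positive. First I would recall from \eqref{eq:pf-lem2.1-step1} and \eqref{eq:rho-prime} the exact expansion
\begin{align*}
	\partial_t w_\epsilon(x,t) + H(x, w_\epsilon'(x,t), w_\epsilon(x,t))
	=&\, \Theta\,\epsilon e^{-\Theta t}\rho(x) - \epsilon\mu e^{-\Theta t}\rho(x) \\
	&\, - \epsilon e^{-\Theta t}\rho(x)\Big(\widehat H^{\epsilon e^{-\Theta t}}_u - \tfrac{\partial H}{\partial u}(x,u_0'(x),u_0(x))\Big) \\
	&\, + \widehat H^{\epsilon e^{-\Theta t}}_{pp}\cdot(\epsilon e^{-\Theta t}\rho'(x))^2,
\end{align*}
valid on all of $\mathbb{S}\times[0,+\infty)$, together with the uniform bounds $|\widehat H^{\epsilon e^{-\Theta t}}_{pp}|,|\widehat H^{\epsilon e^{-\Theta t}}_{uu}|,|\widehat H^{\epsilon e^{-\Theta t}}_{up}|\leqslant M_0$ and $|\widehat H^{\epsilon e^{-\Theta t}}_u - \tfrac{\partial H}{\partial u}(x,u_0'(x),u_0(x))|\leqslant |\epsilon|e^{-\Theta t}M_0M_1$, which hold because $e^{-\Theta t}\in(0,1]$ for $\Theta>0$ and $|\epsilon|\leqslant 1$. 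These are exactly the estimates already assembled in Lemma~\ref{lem2.2}, so no new computation is needed there.

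Next I would set $\widetilde\epsilon_0(\Theta):=\min\{\tfrac{|\Theta-\mu|M_2}{M_0M_1(M_1+M_2)},1\}=\min\{\tfrac{(\Theta-\mu)M_2}{M_0M_1(M_1+M_2)},1\}$ and, for $\epsilon\in(0,\widetilde\epsilon_0]$, factor out $-\epsilon e^{-\Theta t}\rho(x)$ (which is negative) from the leading terms, obtaining
\begin{align*}
	\partial_t w_\epsilon(x,t) + H(x, w_\epsilon'(x,t), w_\epsilon(x,t))
	\geqslant&\, -\epsilon e^{-\Theta t}\rho(x)\Big(\epsilon e^{-\Theta t}M_0M_1 + \mu - \Theta\Big) - (\epsilon e^{-\Theta t})^2 M_0 M_1^2 \\
	\geqslant&\, -\epsilon e^{-\Theta t}\rho(x)\Big(\epsilon e^{-\Theta t}M_0M_1 + \epsilon e^{-\Theta t}\tfrac{M_0M_1^2}{M_2} + \mu - \Theta\Big) \\
	\geqslant&\, -\epsilon e^{-\Theta t}\rho(x)\Big(\epsilon M_0M_1 + \epsilon\tfrac{M_0M_1^2}{M_2} + \mu - \Theta\Big) \geqslant 0,
\end{align*}
where in the last step one uses $\epsilon e^{-\Theta t}\leqslant\epsilon\leqslant\widetilde\epsilon_0$ and the choice of $\widetilde\epsilon_0$, which makes the parenthesis $\leqslant\mu-\Theta+\widetilde\epsilon_0\cdot\tfrac{M_0M_1(M_1+M_2)}{M_2}\leqslant 0$; note the sign flip relative to Lemma~\ref{lem2.2} is precisely what turns ``subsolution'' into ``supersolution.'' Combined with Lemma~\ref{0} and \eqref{eq:def-w-epsilon-3}, this yields item~(1). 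For item~(2), with $\epsilon\in[-\widetilde\epsilon_0,0)$ the factor $-\epsilon e^{-\Theta t}\rho(x)$ is positive, all inequalities reverse, and one concludes $\partial_t w_\epsilon + H(x,w_\epsilon',w_\epsilon)\leqslant 0$, i.e. $w_\epsilon$ is a $C^\infty$ subsolution of \eqref{eq:HJe}; I would simply say ``the argument is completely analogous, so we omit it.''

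There is essentially no serious obstacle: the only point requiring care is the direction of the inequalities, since $\Theta>\mu$ here (contrast with $\Theta<\mu$ in Lemma~\ref{lem2.2}) reverses the role of sub/supersolution for each sign of $\epsilon$, and one must make sure the bound $\epsilon e^{-\Theta t}\leqslant\widetilde\epsilon_0$ is used on the correct side. Unlike Lemma~\ref{lem2.3}, here $e^{-\Theta t}$ stays in $(0,1]$ for all $t\geqslant 0$ because $\Theta>0$, so the estimate is global in $t$ and there is no need to restrict to a finite time interval — this is the structural reason the conclusion holds on all of $\mathbb{S}\times[0,+\infty)$. I would therefore present the proof as a short variant of Lemma~\ref{lem2.2}, highlighting only the sign changes and the global-in-time validity, and omitting the symmetric case.
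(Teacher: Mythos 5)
Your proposal is correct and follows essentially the same route as the paper: the paper's proof of Lemma \ref{lem2.4} likewise reuses the expansion and bounds from Lemma \ref{lem2.2}, takes the same $\widetilde\epsilon_0(\Theta)=\min\{|\mu-\Theta|M_2/(M_0M_1(M_1+M_2)),1\}$ from \eqref{eq:pf-lem2.2-e0}, and runs the identical chain of inequalities (using $\epsilon e^{-\Theta t}\leqslant\widetilde\epsilon_0$ and $\rho\geqslant M_2$) to get $\partial_t w_\epsilon+H\geqslant 0$ globally in $t$, omitting the symmetric case (2). Your remarks on the sign of $\mu-\Theta$ and on why no finite-time restriction is needed (in contrast to Lemma \ref{lem2.3}) are accurate and consistent with the paper.
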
	 
\begin{proof}

For any given $\Theta \in  (\mu,+\infty) $, similarly, we will just pay attention to item (1) since the argument for item (2) is completely similar.
	 	Similar with Lemma \ref{lem2.2}, for $ \epsilon\in (0, \widetilde \epsilon_0] $, where $\widetilde \epsilon_0$ is defined in \eqref{eq:pf-lem2.2-e0}, we have that
 \begin{align*}
	&\, \partial_t w_\epsilon (x,t) + H(x, w_\epsilon' (x,t), w_\epsilon (x,t))\\
=&\,    \Theta\cdot \epsilon e^{-\Theta t} \rho(x)  -\epsilon  \mu e^{-\Theta t} \rho(x) -\epsilon e^{-\Theta t} \rho(x)\Big(\widehat H^{\epsilon e^{-\Theta t}}_u-   \frac{\partial H}{\partial u} (x, \,  u_0'(x),u_0(x) ) \Big) \\ 
&\, + \widehat H^{\epsilon e^{-\Theta t}}_{pp} \cdot(\epsilon e^{-\Theta t} \rho'(x) )^2 \\
\geqslant &\, -\epsilon e^{-\Theta t} \rho(x) \cdot \Big( \epsilon e^{-\Theta t} M_0M_1  +\mu -\Theta \Big) -(\epsilon e^{-\Theta t} )^2M_0M_1^2  \\\geqslant &\,   -\epsilon e^{-\Theta t} \rho(x)  \cdot \Big( \epsilon e^{-\Theta t} M_0M_1 +\epsilon e^{-\Theta t}  \frac{M_0M_1^2}{M_2}  +\mu   - \Theta  \Big) \\
\geqslant &\,  -\epsilon e^{-\Theta t} \rho(x)  \cdot \Big( 	\widetilde	\epsilon_0   M_0M_1 + \	\widetilde	\epsilon_0   \frac{M_0M_1^2}{M_2}  +\mu   - \Theta  \Big)\\
\geqslant &\, 0.
\end{align*}
So far, by Lemma \ref{0}  and \eqref{eq:def-w-epsilon-3}, $w_\epsilon(x,t)$ is a   $C^\infty$ supersolution of \eqref{eq:HJe}.
\end{proof}
In the next section, we use the construction of subsolutions and supersolutions of \eqref{eq:HJe} in Lemma \ref{lem2.3} and Lemma \ref{lem2.4} to prove the Theorem \ref{thm1}.

\section{Proof of Theorem \ref{thm1}}

 Before we begin to prove the Theorem  \ref{thm1}, let us review some conclusions. 
Recall the comparison principle of Hamilton-Jacobi equation.
\begin{proposition}\label{prop:comparison principle} \cite{L}\cite{Ishii-WWY}
	Assume (H1)-(H3). For any given $T>0$, let $v,w\in C(M\times [0, T) ,\R)$  be respectively, subsolution and supersolution of 
	$$
	u_t+H(x,u_x,u)=0, \quad \forall (x,t)\in M\times (0,T).
	$$ 
	If $w(x,0)\geqslant v(x,0)$ for any $x\in M$, then $w \geqslant v$ on $ M\times [0,T)$.
\end{proposition}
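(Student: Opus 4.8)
The plan is to prove Proposition~\ref{prop:comparison principle} by the classical doubling-of-variables method; the one feature that deserves attention is that (H3) only bounds $\partial_uH$ in absolute value, with no sign, so one does not get a contraction but an estimate that is finite on each finite time horizon. First I would reduce: it is enough to prove $v\leq w$ on $M\times[0,T']$ for every $T'\in(0,T)$. On the compact set $M\times[0,T']$ both $v$ and $w$ are bounded, all suprema below are attained, and there is no spatial boundary since $M$ is the circle.

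Then I would pass to the scalar function $\phi(t):=\max_{x\in M}\bigl(v(x,t)-w(x,t)\bigr)$, which is continuous on $[0,T']$ (by uniform continuity of $v,w$) and satisfies $\phi(0)\leq 0$. The core step is to prove that $\phi$ is a viscosity subsolution of $\phi'=\kappa\phi$ on $(0,T')$: whenever $g\in C^1$ touches $\phi$ from above at $t_*\in(0,T')$ with $\phi(t_*)>0$, then $g'(t_*)\leq\kappa\,\phi(t_*)$. To obtain this I would fix $x_*$ with $v(x_*,t_*)-w(x_*,t_*)=\phi(t_*)$ and double around $t_*$, maximising
$$
\Phi_\epsilon(x,y,t,s)=v(x,t)-w(y,s)-\tfrac{1}{2\epsilon}|x-y|^2-\tfrac{1}{2\epsilon}(t-s)^2-g(t)
$$
over $M\times M\times[t_*-h,t_*+h]^2$ (replacing $g$ by $g+(t-t_*)^2$ and shrinking $h$ if necessary, so that the diagonal maximum of the undoubled expression is attained only at $t=t_*$). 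By the Crandall--Ishii--Lions penalisation estimates the maximiser $(x_\epsilon,y_\epsilon,t_\epsilon,s_\epsilon)$ converges to $(x_*,x_*,t_*,t_*)$ with $\tfrac1\epsilon\bigl(|x_\epsilon-y_\epsilon|^2+(t_\epsilon-s_\epsilon)^2\bigr)\to0$; feeding it into the sub/supersolution inequalities for $v$ and $w$, cancelling the common time-slope $\tfrac{t_\epsilon-s_\epsilon}{\epsilon}$, and letting $\epsilon\to0$ gives
$$
g'(t_*)+H\bigl(\bar x,\bar p,v(\bar x,t_*)\bigr)-H\bigl(\bar x,\bar p,w(\bar x,t_*)\bigr)\leq 0
$$
for some $\bar x$ realising $\phi(t_*)$ and some adjoint $\bar p$. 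Writing the $H$-difference as $\bigl(\int_0^1\partial_uH\,dr\bigr)\bigl(v(\bar x,t_*)-w(\bar x,t_*)\bigr)$ and using $|\partial_uH|\leq\kappa$ together with $v(\bar x,t_*)-w(\bar x,t_*)=\phi(t_*)>0$ yields precisely $g'(t_*)\leq\kappa\,\phi(t_*)$. I would then conclude by an elementary scalar comparison: for each $\delta>0$ the function $\delta e^{(\kappa+1)t}$ is a strict supersolution of $\phi'=\kappa\phi$, it exceeds $\phi$ at $t=0$, and at a hypothetical first crossing time the subsolution test would demand $(\kappa+1)\delta e^{(\kappa+1)t}\leq\kappa\delta e^{(\kappa+1)t}$, which is false; hence $\phi<\delta e^{(\kappa+1)t}$ on $[0,T']$, and letting $\delta\downarrow0$ gives $\phi\leq0$, i.e.\ $v\leq w$ on $M\times[0,T']$, hence on $M\times[0,T)$.

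The main obstacle is the doubling step, and within it the genuinely nontrivial point: one must know that the adjoint variables $p_\epsilon=(x_\epsilon-y_\epsilon)/\epsilon$ stay bounded as $\epsilon\to0$, so that $H$ lives on a \emph{fixed} compact set of $(x,p,u)$, where it is uniformly continuous, and so that $H(x_\epsilon,p_\epsilon,\cdot)-H(y_\epsilon,p_\epsilon,\cdot)\to0$. This is exactly where the superlinearity (H2) is used: under (H2) a bounded viscosity subsolution of the evolutionary equation is Lipschitz in $x$, locally uniformly in $t$, with a constant controlled by its $C^0$ norm, and then $|p_\epsilon|$ is at most twice that constant by the standard inequality $\tfrac{1}{2\epsilon}|x_\epsilon-y_\epsilon|^2\leq v(x_\epsilon,t_\epsilon)-v(y_\epsilon,t_\epsilon)$ coming from $\Phi_\epsilon(x_\epsilon,y_\epsilon,t_\epsilon,s_\epsilon)\geq\Phi_\epsilon(y_\epsilon,y_\epsilon,t_\epsilon,s_\epsilon)$. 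The remaining ingredients — existence and localisation of the maxima, the penalisation lemma, and the scalar ODE comparison — are routine, and the convexity (H1) plays no role in this statement. The result is of course classical, and the above only records the route behind the citations \cite{L,Ishii-WWY}.
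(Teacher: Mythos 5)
The crux you yourself flagged is exactly where the argument breaks: the lemma you invoke, namely that under (H2) a bounded continuous viscosity subsolution of the \emph{evolution} equation is Lipschitz in $x$, locally uniformly in $t$, with a constant controlled by its $C^0$-norm, is false. Coercivity gives spatial Lipschitz bounds for subsolutions of the stationary inequality $H(x,Du,u)\leq 0$ and for the semigroup solution $T^-_t\varphi$, but not for arbitrary continuous subsolutions of $v_t+H(x,v_x,v)\leq 0$: a very negative time slope can pay for an arbitrarily large spatial slope. Concretely, take $H(x,p,u)=p^2$, which satisfies (H1)--(H3). Each reversed cone $w_{L,x_0,\tau}(x,t)=L\,d(x,x_0)-L^2(t-\tau)$ is a viscosity subsolution (on the ridge $x=x_0$ the superdifferential is empty; at the antipodal kink every superdifferential element $(q,p)$ has $q=-L^2$, $|p|\leq L$, so $q+p^2\leq 0$), and so are $\max(w_{L,x_0,\tau},-\epsilon)$, finite minima of such Lipschitz subsolutions with the constant $0$ (this uses convexity of $(q,p)\mapsto q+p^2$ via the a.e.\ characterization), and uniform limits of these. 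Choosing $L_n=2^n$, $\epsilon_n=2^{-n}$, $\tau_n=t_0-\epsilon_n/L_n^2$ and well-separated centers $x_n$, the function $v:=\inf_n\min\{0,\max(w_{L_n,x_n,\tau_n},-\epsilon_n)\}$ is a bounded continuous viscosity subsolution on $\mathbb{S}\times(0,T)$ whose slice $v(\cdot,t_0)$ at the interior time $t_0$ contains, for every $n$, a segment of slope $L_n$ joining the values $0$ and $-\epsilon_n$; hence it is not Lipschitz in $x$ at all. Consequently your key estimate $|p_\epsilon|=|x_\epsilon-y_\epsilon|/\epsilon\leq 2\,\mathrm{Lip}(v(\cdot,t_\epsilon))$ has no basis, and since (H1)--(H3) impose no modulus of the form $|H(x,p,u)-H(y,p,u)|\leq\omega(|x-y|(1+|p|))$ (think of $H=a(x)p^2+\dots$ with nonconstant $a>0$), the doubling argument cannot be closed: after subtracting the two viscosity inequalities you are left with $H(x_\epsilon,p_\epsilon,\cdot)-H(y_\epsilon,p_\epsilon,\cdot)$ with possibly unbounded $p_\epsilon$, which you cannot send to zero.

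The rest of your sketch is fine and standard: handling the non-monotone $u$-dependence through (H3) and the Gronwall factor $e^{\kappa t}$, the scalar comparison with $\delta e^{(\kappa+1)t}$, and the penalization bookkeeping. But the spatial step needs a different mechanism, and this is precisely what the citations supply: the paper gives no proof of this proposition, quoting \cite{L} and \cite{Ishii-WWY}, and the argument in \cite{Ishii-WWY} leans on the convex, variational structure of the problem (domination/representation of subsolutions and of $T^-_t$) rather than on a Lipschitz bound for general subsolutions of the evolution equation. So your closing remark that (H1) plays no role is also not accurate: to repair your route you would either have to approximate an arbitrary continuous subsolution by Lipschitz ones, or compare both functions with the variational solution as in \cite{WWY1,Ishii-WWY}, and in either case convexity enters essentially.
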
 
 Here $M$ is an arbitrary smooth, connected, compact Riemannian manifold without boundary.  In this paper, we take $M$ as $\mathbb{S}$.

\subsection{Proof of Theorem \ref{thm1} (1)}
For the proof of  Theorem \ref{thm1} (1), we divide it into three parts:
\begin{itemize}
	\item [{\bf Step1:}] We show that there exists $\delta_0\in \R^+$ such that	 $u_0$ is asymptotic stable on $\Omega_{\delta_0}$ for $\mu>0$.
\end{itemize}
Define  $w_{\epsilon}(x,t) $ as in \eqref{eq:def-w-epsilon-1}. Applying Lemma \ref{lem2.2}, we have that for any given $\Theta\in (0, \mu) $, there exists $\widetilde \epsilon_0(\Theta)>0$ such that $w_{\widetilde \epsilon_0}(x,t)$, $w_{-\widetilde \epsilon_0}(x,t)$ are  respectively,  $C^\infty$ subsolution and supersolution of equation \eqref{eq:HJe}.
Note that $T_t^- w_{\widetilde \epsilon_0}(x,0)$ is a solution of equation \eqref{eq:HJe}, by Proposition \ref{prop:comparison principle}, we have 
\begin{equation}\label{eq:pf-thm1-1}
\begin{split}
T_t^- w_{\widetilde \epsilon_0}(x,0) \geqslant &\, w_{\widetilde \epsilon_0}(x,t)=u_0(x) - \widetilde \epsilon_0 \rho(x)  e^{-\Theta t }, \quad \forall (x,t)\in \mathbb{S} \times [0,+\infty),\\
T_t^- w_{-\widetilde \epsilon_0}(x,0) \leqslant &\, w_{-\widetilde \epsilon_0}(x,t)=u_0(x) + \widetilde \epsilon_0 \rho(x)  e^{-\Theta t }, \quad \forall (x,t)\in \mathbb{S} \times [0,+\infty).
\end{split}
\end{equation}
For any $\varphi\in \Omega_{\widetilde \epsilon_0  M_2}$, we can get 
$$
 w_{\widetilde \epsilon_0}(x,0) =u_0- \widetilde \epsilon_0 \rho(x) \leqslant u_0-\widetilde \epsilon_0  M_2 \leqslant  \varphi \leqslant u_0+\widetilde \epsilon_0  M_2\leqslant  u_0+ \widetilde \epsilon_0 \rho(x)= w_{-\widetilde \epsilon_0}(x,0).
$$
where $M_2$ is as in \eqref{eq:def-M1-M2}.
In view of \eqref{eq:pf-thm1-1}, it implies that for any $(x,t)\in \mathbb{S} \times [0,+\infty)$,
\begin{align*}
	T_t^- \varphi \geqslant &\,   T_t^-w_{\widetilde \epsilon_0}(x,0) \geqslant u_0(x) - \widetilde \epsilon_0 \rho(x)  e^{-\Theta t },\\
		T_t^- \varphi \leqslant &\,   T_t^-w_{-\widetilde \epsilon_0}(x,0) \leqslant u_0(x) + \widetilde \epsilon_0 \rho(x)  e^{-\Theta t }.
\end{align*}
This follows that for each $\Theta\in (0, \mu)$ and  $\varphi\in \Omega_{\widetilde \epsilon_0  M_2}$, we have
\begin{equation}\label{eq:pf-thm1-2}
\| T_t^- \varphi-u_0   \|_\infty \leqslant \widetilde \epsilon_0(\Theta) M_1 e^{-\Theta t }, \quad \forall t>0.
\end{equation}
Particularly, by taking $\Theta= \mu/2 $  and 
$$
\delta_0:=  \widetilde \epsilon_0(  \mu/2  ) M_2 = \frac{ \mu  M^2_2}{2M_0 M_1 (M_1+M_2)}, 
$$
we get that $u_0$ is asymptotic stable on $\Omega_{\delta_0}$.

\begin{itemize}
	\item [{\bf Step2:}]  We show that  for any $ \varphi \in  \Omega_{\delta_0 }$,  $ \displaystyle \,
	\limsup_{t\to +\infty} t^{-1} \ln \| T_t^- \varphi-u_0 \|_\infty \leqslant  -\mu.	$
	\end{itemize}
We argue by contradiction. Suppose that there exist $\mathcal{E}\in \R^+$ and $\{t_n\}_{n\in \mathbb{N}}$  such that $\displaystyle \lim_{n\to +\infty}t_n=+\infty$ and
$$
t_n^{-1} \ln \| T_{t_n}^- \varphi-u_0 \|_\infty \geqslant -\mu +\mathcal{E}, \quad \forall n \in \mathbb{N}.
$$
This follows that
$$
\| T_{t_n}^- \varphi-u_0 \|_\infty \geqslant e^{(-\mu+\mathcal{E})t_n}, \quad \forall n\in \mathbb{N}.
$$
By taking $\Theta=\mu-\frac{1}{2}\mathcal{E}$  and  in view of \eqref{eq:pf-thm1-2}, we get
$$
   e^{(-\mu+\mathcal{E})t_n}\leqslant \| T_{t_n}^- \varphi-u_0 \|_\infty \leqslant \widetilde \epsilon_0(\Theta) M_1 e^{-\Theta t_n }.
$$
Thus,
\begin{align*}
	e^{ \frac{1}{2} \mathcal{E} t_n  } =e^{(\Theta-\mu  + \mathcal{E} ) t_n } \leqslant \widetilde \epsilon_0(\Theta) M_1=  \frac{ \mathcal{E}  M_2 }{2 M_0 (M_1+M_2)}.
\end{align*} 
It contradicts with $\displaystyle \lim_{n\to+\infty}  	e^{ \frac{1}{2} \mathcal{E} t_n  } = +\infty$.

\begin{itemize}
	\item [{\bf Step3:}]  We show that  there exists $ \varphi_0 \in  \Omega_{\delta_0 }$ such that $ \displaystyle \,
	\liminf_{t\to +\infty} \, t^{-1} \ln \| T_t^- \varphi_0-u_0 \|_\infty \geqslant  -\mu.	$
	\end{itemize}
	
Taking $\varphi_0=u_0(x) - \widetilde  \epsilon_0  \rho(x)$, we show that  $ \displaystyle \,
	\liminf_{t\to +\infty} t^{-1} \ln \| T_t^- \varphi_0-u_0 \|_\infty \geqslant  -\mu
	$. We argue by contradiction. Suppose that there exist $\mathcal{E}\in \R^+$ and $\{t_n\}_{n\in \mathbb{N}}$  such that $\displaystyle \lim_{n\to +\infty}t_n=+\infty$ and
$$
t_n^{-1} \ln \| T_{t_n}^- \varphi_0-u_0 \|_\infty \leqslant -\mu -\mathcal{E}, \quad \forall n \in \mathbb{N}.
$$
This follows that
\begin{equation}\label{eq:pf-thm1-step3-1}
\| T_{t_n}^- \varphi_0-u_0 \|_\infty \leqslant e^{(-\mu-\mathcal{E})t_n}, \quad \forall n\in \mathbb{N}.
\end{equation}
By taking $\Theta=\mu+\frac{1}{2}\mathcal{E}$  and  applying Lemma \ref{lem2.4}  and   Proposition \ref{prop:comparison principle}, we have that 
$$
 T_{t}^- \varphi_0(x) \leqslant u_0(x) - \widetilde  \epsilon_0  \rho(x) e^{- (\mu+ \frac{1}{2} \mathcal{E})   t }, \quad \forall t>0,x\in \mathbb{S}.
$$
In view of \eqref{eq:pf-thm1-step3-1}, we get  
$$
e^{ \frac{1}{2} \mathcal{E} t_n  } \leqslant ( \widetilde  \epsilon_0 M_2  )^{-1} =\frac{2 M_0 M_1 (M_2+M_1)}{ \mathcal{E}  M^2_2 }.
$$
It contradicts with $\displaystyle \lim_{n\to+\infty}  	e^{ \frac{1}{2} \mathcal{E} t_n  } = +\infty$.

 So far, we have completed the proof of Theorem \ref{thm1}(1).

\subsection{Proof of Theorem \ref{thm1} (2)}
We show that $u_0$ is  Lyapunov unstable for    $\mu<0$.

Applying Lemma \ref{lem2.3}, we get that for any fixed $\Theta\in (\mu,0) $, there exists $\widetilde \epsilon_0(\Theta)>0$ in \eqref{eq:pf-lem2.2-e0} such that for any $\epsilon \in (0, \widetilde \epsilon_0]$, $  w_{\epsilon}(x,t)$ is   a  $C^\infty$   supersolution of equation \eqref{eq:HJe} . Note that $T_t^- w_{\epsilon}(x,0)$ is a solution of equation \eqref{eq:HJe}, by Proposition \ref{prop:comparison principle}, we get that for any $\epsilon \in (0, \widetilde \epsilon_0]$,
\begin{equation}\label{eq:pf-thm1--2}
\begin{split}
T_t^- w_{\epsilon}(x,0) \leqslant &\, w_{\epsilon}(x,t)=u_0(x) - \epsilon \rho(x)  e^{-\Theta t }, \quad \forall (x,t)\in  \mathbb{S}\times \left[0, \frac{\ln 	\widetilde	\epsilon_0 -\ln |\epsilon| }{-\Theta} \right].
\end{split}
\end{equation}
For any  $\delta  \in (0,  \|\rho \|_\infty\cdot \widetilde \epsilon_0(\Theta) \, ] $,  note that
$$
 \|w_{ \delta  /\|\rho \|_\infty  }(\cdot ,0)- u_0\|_\infty   \leqslant \delta.
 $$
By \eqref{eq:pf-thm1--2} and \eqref{eq:def-M1-M2},  there exist   $\mathcal{E} := \displaystyle \min_{x\in \mathbb{S}} \rho(x)  \cdot  \widetilde \epsilon_0 = M_2 \widetilde \epsilon_0  >0$ and
$$
  t_0:=   \frac{\ln (\|\rho \|_\infty \widetilde \epsilon_0  ) - \ln ( \delta ) }{-\Theta } >0
  $$
such that  
 $$ 
 \| T_{t_0}^- w_{\delta /\|\rho \|_\infty}(\cdot ,0)-u_0\|_\infty \geqslant \frac{ \delta }{\| \rho \|_\infty } \min_{x\in \mathbb{S}} \rho(x) \cdot e^{-\Theta t_0 } = \delta M_2 \| \rho \|_\infty ^{-1} e^{-\Theta t_0}  =  \mathcal{E}.
$$
Applying Lemma \ref{lem2.1}, we obtain that $w_{\delta /\|\rho \|_\infty}(\cdot ,0)$ is a  $C^\infty$   supersolution of equation \eqref{eq:HJs}. This implies that
$$
T_t^-  w_{\delta /\|\rho \|_\infty}(x ,0)  \leqslant w_{\delta /\|\rho \|_\infty}(x,0) , \quad \forall (x,t)\in \mathbb{S} \times [0,+\infty).
$$
 Thus for any $t\geqslant t_0$, we can get
$$
 T_{t}^- w_{\delta /\|\rho \|_\infty}(\cdot ,0) =  T_{t_0}^-\mathbb{S}rc  T_{t-t_0}^- w_{\delta /\|\rho \|_\infty}(\cdot ,0) \leqslant  T_{t_0}^- w_{\delta /\|\rho \|_\infty}(\cdot ,0) \leqslant u_0-\mathcal{E}.
$$
Therefore, we have  
$$
 \| T_{t}^- w_{\delta /\|\rho \|_\infty}(\cdot ,0)-u_0\|_\infty \geqslant \mathcal{E}, \quad \forall \, t\geqslant t_0,
$$
which indicates that $u_0$ is  Lyapunov unstable for  $\mu<0$.
\qed

\section{Proof of Theorem \ref{thm2}}
We divide the proof of Theorem \ref{thm2} into several steps: from Lemma \ref{lem:thm2-1} to Lemma \ref{lem:thm2-5}. In this section we always assume (H1)-(H3) and \eqref{A}. 
\medskip

 First, we shall construct periodic subsolutions. Let 
 $$
 Z:=-\int_0^1  \big( B(\tau )\big)^{-1} d\tau \in \R \backslash \{0 \}, \quad \mathcal{T}:=|Z|=\Big|\int_0^1  \big( B(\tau )\big)^{-1} d\tau \Big| \in \R^+.
 $$
    For any fixed $x_0\in \mathbb{S}$,  we  define  
		\begin{equation}\label{eq:w}
			w (x,t):=u_0(x)+\epsilon \rho(x) +\epsilon \rho(x) \sin \Big(-\frac{\pi}{2}+  f(x)-f(x_0)+  \frac{2\pi}{Z}t\Big) ,\quad (x,t) \in \mathbb{S}\times[0,+\infty), 
		\end{equation}
where $\rho(x)$ is the function defined in \eqref{0} and of class $C^\infty$ in Lemma \ref{0} and
\begin{align*}
    & f(x) := 2 \pi \cdot \frac{\int_0^x (B(\tau))^{-1} d \tau }{\int_0^1 (B(\tau))^{-1} d \tau } =  \frac{2 \pi}{Z}  \int_0^x  - (B(\tau))^{-1} d \tau >0 , 
\end{align*}
and $\epsilon >0$ is a parameter. 

Next, we will show that $w (x,t)$ is indeed a nontrivial periodic subsolution for sufficient small $\epsilon>0$.
 \begin{lemma}\label{lem:thm2-1}
Assume $\mu <0$. There exists $\widetilde \epsilon_1>0$ such that for any $\epsilon\in (0,\widetilde \epsilon_1]$, $w(x,t)$ defined in \eqref{eq:w} is a $\mathcal{T}$-periodic subsolution of \eqref{eq:HJe}, i.e. for any $(x,t)\in \mathbb{S} \times [0,+\infty)$,
  $$
w(x,t+\mathcal{T})=w(x,t)   \quad \text{and} \quad  \partial_t \omega(x,t) + H (x, \partial_t \omega(x,t) , \omega(x,t) ) \leqslant 0.
  $$
 \end{lemma}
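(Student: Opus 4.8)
The plan is to verify the two assertions separately. Periodicity is immediate: the only $t$-dependence of $w$ sits inside $\sin\big(\theta(x,t)\big)$ with $\theta(x,t):=-\tfrac{\pi}{2}+f(x)-f(x_0)+\tfrac{2\pi}{Z}t$, and since $\tfrac{2\pi}{Z}\mathcal{T}=\tfrac{2\pi}{Z}|Z|=\pm2\pi$ we get $\theta(x,t+\mathcal{T})=\theta(x,t)\pm2\pi$, hence $w(x,t+\mathcal{T})=w(x,t)$. Also $w\in C^\infty(\mathbb{S}\times[0,+\infty))$ because $u_0,\rho,f$ are $C^\infty$ (using that $B$ is smooth and nonvanishing, cf.\ Lemma \ref{0}), so it suffices to check the classical subsolution inequality, which then implies the viscosity one.

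For the inequality, write $\eta=\eta(x,t):=1+\sin\theta(x,t)\in[0,2]$, so that $w=u_0+\epsilon\rho\,\eta$. Then $\partial_t w=\tfrac{2\pi}{Z}\epsilon\rho\cos\theta$ and $\partial_x w=u_0'+\epsilon(\rho'\eta+\rho f'\cos\theta)$, where $f'(x)=-\tfrac{2\pi}{ZB(x)}$. I would expand $H(x,\partial_x w,w)$ about $(x,u_0'(x),u_0(x))$ with an integral second-order remainder exactly as in the proof of Lemma \ref{lem2.1}, using $H(x,u_0',u_0)=0$ and $\tfrac{\partial H}{\partial p}(x,u_0',u_0)=B(x)$; the linear part is $\epsilon B(\rho'\eta+\rho f'\cos\theta)+\epsilon\,\tfrac{\partial H}{\partial u}(x,u_0',u_0)\,\rho\eta$. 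Now I invoke \eqref{eq:rho-prime} in the form $B\rho'=\rho\big(\mu-\tfrac{\partial H}{\partial u}(x,u_0',u_0)\big)$ together with $Bf'=-\tfrac{2\pi}{Z}$: the $\tfrac{\partial H}{\partial u}$-contributions cancel and the linear part collapses to $\epsilon\mu\rho\,\eta-\tfrac{2\pi}{Z}\epsilon\rho\cos\theta$. Adding $\partial_t w$ kills the $\cos\theta$ term, leaving
$$\partial_t w + H(x,\partial_x w,w)=\epsilon\mu\,\rho(x)\,\eta(x,t)+R(x,t),$$
where $R$ gathers the quadratic remainder and the error term $\epsilon\rho\eta\big(\widehat H_u-\tfrac{\partial H}{\partial u}(x,u_0',u_0)\big)$ of the type appearing in Lemma \ref{lem2.1}.

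The step needing the most care is the behaviour near $\sin\theta=-1$, where the leading term $\epsilon\mu\rho\eta$ — which is $\leqslant0$ since $\mu<0$, $\rho>0$, $\eta\geqslant0$ — degenerates and could a priori be overwhelmed by $R$. The point is that $R$ vanishes to the same order in $\eta$: the elementary identities $\cos^2\theta=(1-\sin\theta)\eta\leqslant2\eta$ and $\eta^2\leqslant2\eta$ on $[0,2]$ give $|\partial_x w-u_0'|^2\leqslant C\epsilon^2\eta$ and $|w-u_0|^2\leqslant2\epsilon^2\|\rho\|_\infty^2\eta$, so, by compactness of $\mathbb{S}$ and uniform boundedness of the relevant derivatives of $H$ on a fixed compact set (valid for $\epsilon\leqslant1$), $|R(x,t)|\leqslant C'\epsilon^2\eta(x,t)$ with $C'$ independent of $\epsilon$.

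Combining, one gets
$$\partial_t w+H(x,\partial_x w,w)\leqslant\epsilon\,\eta\,\big(\mu\rho+C'\epsilon\big)\leqslant\epsilon\,\eta\,\big(\mu M_2+C'\epsilon\big),$$
with $M_2=\min_{\mathbb{S}}\rho>0$; choosing $\widetilde\epsilon_1:=\min\{1,\,|\mu|M_2/C'\}$ makes the bracket $\leqslant0$, and since $\epsilon\eta\geqslant0$ this yields $\partial_t w+H(x,\partial_x w,w)\leqslant0$ on $\mathbb{S}\times[0,+\infty)$ for all $\epsilon\in(0,\widetilde\epsilon_1]$, which together with the periodicity already established completes the proof.
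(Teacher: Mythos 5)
Your proposal is correct and follows essentially the same route as the paper: a second-order Taylor expansion of $H$ at $(x,u_0'(x),u_0(x))$, cancellation of the linear terms via $B\rho'=\rho\,(\mu-\tfrac{\partial H}{\partial u})$ and $Bf'=-\tfrac{2\pi}{Z}$ together with $\partial_t w$, and the key observation (via $\cos^2 F=(1-\sin F)(1+\sin F)$ and $\eta^2\leqslant 2\eta$) that the remainder is $O(\epsilon^2\eta)$, so it is dominated by $\epsilon\mu\rho\eta$ for $\epsilon$ small. The only difference is cosmetic: the paper tracks explicit constants ($\widetilde M_0$, $\alpha$, an explicit $\widetilde\epsilon_1$) where you use a generic $C'$.
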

\begin{proof}
By \eqref{eq:w}, it is clear that 
\begin{equation}\label{eq:pf-thm2-step1-T-periodic}
w(x,t+\mathcal{T})=w(x,t), \quad \forall (x,t)\in \mathbb{S} \times [0,+\infty).
\end{equation}
 For any $x\in \mathbb{S}$, $t\geqslant 0$,   let 
$$
 F(x,t) := - \frac{\pi}{2} + f(x) - f(x_0) + \frac{2 \pi}{Z} t.
 $$ 
For $\forall \ \nu \in [0,1]$, let
\begin{align*}
	\widetilde H^\nu_u(x,t):=  &\,   \int_0^1 	 \frac{\partial H}{\partial u} \Big(x, \,  u_0'(x) +  \nu [\rho(x) \cos{F(x,t)} + \rho'(x) (1 + \sin{F(x,t)})] ,\\  &\,   u_0(x) + \nu   \tau \rho(x) (1 + \sin{F(x,t)}) )   \Big)\ d \tau, \\
    \widetilde {H}_{pp}^{\nu}(x,t) :=  &\,   \int_0^1 s \int_0^1 \frac{\partial ^2 H}{\partial p^2}  \Big(x, u_0'(x) + \nu s \tau [\rho(x) \cos{F(x,t)} + \rho'(x) (1 + \sin{F(x,t)})], u_0(x) \Big) d \tau ds , \\
     \widetilde{H}_{uu}^{\nu}(x,t) :=  &\,     \int_0^1 \frac{\partial ^2 H}{\partial u^2}  \Big(x, u_0'(x) + \nu [\rho(x) \cos{F(x,t)} + \rho'(x) (1 + \sin{F(x,t)})], \\
   & \hspace{5cm}   u_0(x)+ \tau\cdot \nu   \rho(x) (1 + \sin{F(x,t)}) \Big) d \tau  , \\
     \widetilde {H}_{pu}^{\nu}(x,t) :=  &\,   \int_0^1 \frac{\partial ^2 H}{\partial p \partial u }\Big(x,    u_0'(x) + \tau\cdot \nu [\rho(x) \cos{F(x,t)} + \rho'(x) (1 + \sin{F(x,t)})] , u_0(x) \Big)\, d\tau .
\end{align*}
It is clear that $\widetilde {H}_{pp}^{\nu}(x,t),    \widetilde{H}_{uu}^{\nu}(x,t),   \widetilde {H}_{pu}^{\nu}(x,t)$ is periodic with respect to $t$. Set 
\begin{align*}     
     \widetilde    M_0 := \Big\{ \max_{(x,t)\in \mathbb{S} \times [0, + \infty) \atop \nu \in [-1,1]} \big |& \widetilde{H}_{pp}^{\nu}(x,t) \big |  , \max_{(x,t)\in \mathbb{S} \times [0, + \infty) \atop \nu \in [-1,1]} \big | \widetilde{H}_{uu}^{\nu}(x,t) \big | ,\max_{(x,t)\in \mathbb{S} \times [0, + \infty) \atop \nu \in [-1,1]} \big | \widetilde{H}_{pu}^{\nu}(x,t) \big |  \Big\}, 
\end{align*}
and take
$$
\widetilde \epsilon_1: = \min \Big\{   \frac{-\mu}{M_1 \widetilde M_0}  \cdot \Big[   \frac{8 \pi^2}{\mathcal{T}^2 \min_{x \in \mathbb{S}}|B(x)|^2} + 2 \alpha^2 +  \frac{4 \pi \alpha }{\mathcal{T}   \min_{x \in \mathbb{S}}|B(x)|}+2+2 \alpha + \frac{2\pi}{\mathcal{T}\min_{x\in \mathbb{S}} |B(x)|}  \Big]^{-1}   , 1 \Big\}  .
$$
where $M_1$ is as in \eqref{eq:def-M1-M2}.

We assert that 	
		$w(x,t)$ defined in \eqref{eq:w} is a subsolution of \eqref{eq:HJe} with $w(x_0,0)=u_0(x_0)$.
In view of  \eqref{eq:rho-prime}, one gets
$$
\frac{\partial H}{\partial u} \Big(x,   u_0'(x), u_0(x)\Big) + \frac{\rho^{'}(x)}{\rho(x)} \cdot B(x)=\mu, \quad \forall x\in \mathbb{S},
$$
which implies 
\begin{equation}\label{eq:pf-thm1-rho-rho'}
   \Big | \frac{\rho^{'}(x)}{\rho(x)} \Big | \leqslant \alpha := \frac{\kappa + |\mu |}{\min_{x \in \mathbb{S} } | B(x)|}, \quad \forall x\in \mathbb{S} .
\end{equation}
By direct computation and $H \big(x, u_0^\prime (x) ,u_0(x) \big )=0 $,   we have 
\begin{align*}
    &\, \partial_t \omega(x,t) + H (x, \partial_x \omega(x,t) , \omega(x,t) )  \\
    = &\, \partial_t \omega(x,t) + H(x, u_0', u_0) +  \Big( H(x, \partial_x \omega, u_0) - H(x, u_0', u_0) \Big) + \Big( H(x, \partial_x \omega, \omega) - H(x, \partial_x \omega, u_0) \Big) \\ 
   \leqslant  &\,  \epsilon \rho(x) \cos{F(x,t)} \cdot \frac{ 2 \pi}{Z} + \frac{\partial H}{\partial p}( x, u_0',u_0) \cdot (\partial_x w-u_0' )+ \widetilde H_{pp}^\epsilon \cdot(\partial_x w-u_0' )^2 + \widetilde H_u^\epsilon  \cdot ( \omega -u_0). 
   \end{align*}
   Notice that 
\begin{align*}
 &\, \Big| \widetilde  H^\epsilon_u-   \frac{\partial H}{\partial u} (x, \,  u_0'(x),u_0(x) ) \Big| \\ 
=&\, \Big| \int_0^1 	 \frac{\partial H}{\partial u} \Big(x, \,  \partial_x \omega ,  u_0 + \theta (\omega-u_0 )  \Big)\ d \theta -   \frac{\partial H}{\partial u} (x, \,  u_0'(x),u_0(x) ) \Big| \\
\leqslant &\, \int_0^1 \Big|	 \frac{\partial H}{\partial u} \Big(x, \,  \partial_x \omega ,  u_0 + \theta (\omega-u_0 )  \Big)\   -   \frac{\partial H}{\partial u} (x, \,   \partial_x \omega ,u_0(x) ) \Big| d\theta \\
&\, \quad +\int_0^1 	\Big| \frac{\partial H}{\partial u} (x, \,  \partial_x \omega , u_0 (x))\   -   \frac{\partial H}{\partial u} (x, \,  u_0'(x),u_0(x) ) \Big| \ d\theta \\
= &\, \int_0^1   | (\partial_x \omega - u_0'  ) \cdot    \widetilde H^{1}_{up} |\, d \theta +\int_0^1  |(\omega-u_0)\cdot  \widetilde H^{\theta}_{uu}|  \, d \theta  \\
\leqslant &\,  | \partial_x \omega - u_0' |  \cdot   \max_{(x,t)\in \mathbb{S} \times [0, + \infty) \atop \nu \in [-1,1]} |\widetilde H^{\nu}_{up} |  + |\omega-u_0|\cdot \max_{(x,t)\in \mathbb{S} \times [0, + \infty) \atop \nu \in [-1,1]}  |\widetilde H^\nu_{uu}  |  \\
\leqslant &\,   ( | \partial_x \omega - u_0' | +   |\omega-u_0| )\cdot \widetilde M_0,
\end{align*}
and by \eqref{eq:pf-thm1-rho-rho'} , 
$$
\frac{ 2 \pi}{Z} + f'(x) \cdot B(x)=0, \quad  \frac{\partial H}{\partial u} (x,   u_0' , u_0) + \frac{\rho'(x)}{\rho(x)} \cdot B(x)=\mu,
$$
thus, we get 
\begin{align*}
	 &\, \partial_t \omega(x,t) + H (x, \partial_x \omega(x,t) , \omega(x,t) )  \\
  \leqslant &\, \epsilon \rho(x) \cos{F(x,t)} \cdot \frac{ 2 \pi}{Z} + \Big[ \epsilon \rho(x) \cos{F(x,t)} f'(x) + \epsilon \rho'(x) (1 + \sin{F(x,t)})\Big] B(x) + (\partial_x \omega - u_0')^2 \cdot \widetilde M_0 \\
    & \quad \quad +  | \partial_x \omega - u_0' | \cdot   |\omega-u_0| \cdot \widetilde M_0 + \frac{\partial H}{\partial u} (x, u_0' , u_0)  \cdot (\omega - u_0) + (\omega - u_0)^2 \cdot  \widetilde M_0 \\
    =&\,   \epsilon \rho(x) \cos{F(x,t)} \cdot \Big(\frac{ 2 \pi}{Z} + f'(x) \cdot B(x)\Big) + (\partial_x \omega - u_0')^2 \cdot  \widetilde M_0 + | \partial_x \omega - u_0' | \cdot   |\omega-u_0| \cdot \widetilde M_0  \\  &\, \quad \quad  +\epsilon \rho(x) (1 + \sin{F(x,t)}) \cdot \Big(\frac{\partial H}{\partial u} (x,   u_0' , u_0) + \frac{\rho'(x)}{\rho(x)} \cdot B(x)\Big) + (\omega - u_0)^2 \cdot  \widetilde M_0 \\
    =&\,   (\partial_x \omega - u_0')^2 \cdot  \widetilde M_0 + | \partial_x \omega - u_0' | \cdot   |\omega-u_0| \cdot \widetilde M_0 +  (\omega - u_0)^2 \cdot  \widetilde M_0+  \epsilon \rho(x) (1 + \sin{F(x,t)})\cdot \mu. 
\end{align*}
Now, let us look at each of them.
Notice that
\begin{align*}
	\partial_x \omega - u_0'=&\, \epsilon \rho'(x) (1 + \sin{F(x,t)}) + \epsilon \rho(x) \cos F(x,t) \cdot f'(x) 
	\leqslant  \epsilon \rho(x) \cdot \Big( 2\big | \frac{\rho'(x)}{\rho(x)} \big | +|f'(x) | \Big)  
	\\
	 \leqslant &\, \epsilon \rho(x) \cdot \Big(2\alpha + \frac{2\pi}{\mathcal{T} |B(x)| }  \Big), 	\end{align*}
	and
	\begin{align*}
		( \omega - u_0)^2=&\, \epsilon^2 \rho^2(x)(1 + \sin{F(x,t)})^2 \leqslant  2\epsilon^2 \rho^2(x)(1 + \sin{F(x,t)}), \\
    (\partial_x \omega - u_0 ')^2 =&\, \Big(\epsilon \rho'(x) (1 + \sin{F(x,t)}) + \epsilon \rho(x) \cos F(x,t) \cdot f'(x) \Big)^2  \\
    =&\,  \epsilon^2 \rho^2(x) (1 + \sin{F(x,t)})  \cdot \Big[ (1+\sin {F(x,t)}) \cdot \big | \frac{\rho'(x)}{\rho(x)} \big | ^2 +(1-\sin {F(x,t)}) \cdot |f'(x)|^2 \\
     &\, \quad \quad + 2\cos F(x,t) \cdot  \frac{\rho'(x)}{\rho(x)} f'(x) \Big] \\
     \leqslant &\, \epsilon^2 \rho^2(x) (1 + \sin{F(x,t)}) \cdot \Big[  2\cdot  \big | \frac{\rho'(x)}{\rho(x)} \big | ^2+2\cdot  |f'(x)|^2  + 2  \cdot \big | \frac{\rho'(x)}{\rho(x)} \big |  \cdot  |f'(x)|\Big] \\
     \leqslant &\, \epsilon^2 \rho^2(x) (1 + \sin{F(x,t)}) \cdot  \Big( 2 \alpha ^2 +\frac{8 \pi^2}{\mathcal{T}^2 |B(x)|^2} +   \frac{4 \pi \alpha }{\mathcal{T} |B(x)|}\Big),
\end{align*}
then, one deduces that
\begin{align*}
    &\,  \partial_t \omega(x,t) + H (x, \partial_t \omega(x,t) , \omega(x,t) ) \\
    \leqslant &\, \ \epsilon^2 \rho^2(x) (1 + \sin{F(x,t)}) \cdot  \Big( 2 \alpha ^2 +\frac{8 \pi^2}{\mathcal{T}^2 |B(x)|^2} +  \frac{4 \pi \alpha }{\mathcal{T} |B(x)|} +2 +2 \alpha + \frac{2\pi}{\mathcal{T}|B(x)|} \Big) \widetilde M_0  \\
   &\, \hspace{2cm} +  \epsilon \rho(x) (1 + \sin{F(x,t)})   \mu \\
     \leqslant  &\,  \epsilon \rho(x) (1 + \sin{F(x,t)}) \cdot \Big[ \mu +  \epsilon M_1 \Big( 2 \alpha ^2 +\frac{8 \pi^2}{\mathcal{T}^2 |B(x)|^2} +  \frac{4 \pi \alpha }{\mathcal{T} |B(x)|} +2 +2 \alpha + \frac{2\pi}{\mathcal{T}|B(x)|} \Big) \widetilde M_0   \Big]   \\ 
   \leqslant  &\, 0.
\end{align*}
Hence, in view of \eqref{eq:pf-thm2-step1-T-periodic}, we conclude that $\omega(x,t)$ is a $\mathcal{T}$-periodic subsolution of \eqref{eq:HJe}.
\end{proof}

In order to get the existence of periodic solutions of equation \eqref{eq:HJe}, we need to perform the following steps.
The following Lemma \ref{lem:4.2}-Lemma \ref{lem:4.4} are similar with Lemma 2.3-Lemma 2.5 in \cite{WYZ2023}.

Recall that $T^-_t\varphi(x)$  can be represented by \cite{WWY1}:
\[
T^-_t\varphi(x)=\inf_{y\in \mathbb{S} }h_{y,\varphi(y)}(x,t), \quad (x,t)\in \mathbb{S} \times(0,+\infty).
\]
 Here, the continuous functions $h_{\cdot,\cdot}(\cdot,\cdot):\mathbb{S}\times\R\times \mathbb{S} \times(0,+\infty)\to\R,\ (x_0,u_0,x,t)\mapsto h_{x_0,u_0}(x,t)$ was introduced in \cite{WWY}, called implicit action functions. Given $x_0\in \mathbb{S}$, $u_1$, $u_2\in\mathbb{R}$, 
if $u_1<u_2$, then $h_{x_0,u_1}(x,t)<h_{x_0,u_2}(x,t)$, for all $(x,t)\in \mathbb{S} \times (0,+\infty)$. See \cite{WWY,WWY2,WWY1} for more properties of implicit action functions.

\begin{lemma}\label{lem:4.2}
	For any $x_0\in \mathbb{S}$,
	$$
		\liminf_{t\to +\infty} h_{x_0,u_0(x_0)}(x_0,t)< \limsup_{t\to +\infty}  h_{x_0,u_0(x_0)}(x_0,t).
		$$
\end{lemma}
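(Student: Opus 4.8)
The plan is to trap $h_{x_0,u_0(x_0)}(x_0,t)$ between two bounds: from below, along all times, by the oscillating periodic subsolution $w$ constructed in Lemma~\ref{lem:thm2-1}, which forces $\limsup_{t\to+\infty}$ to exceed $u_0(x_0)$; and from above, along the sequence of times $n\mathcal{T}$, by the value $u_0(x_0)$ that is realized by travelling around the Aubry periodic orbit.

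First I would fix $x_0\in\mathbb{S}$, choose $\epsilon\in(0,\widetilde\epsilon_1]$, and let $w=w(x,t)$ be the function \eqref{eq:w} attached to this $x_0$. Evaluating \eqref{eq:w} at $x=x_0$, $t=0$ gives $w(x_0,0)=u_0(x_0)+\epsilon\rho(x_0)\bigl(1+\sin(-\tfrac{\pi}{2})\bigr)=u_0(x_0)$. By Lemma~\ref{lem:thm2-1}, $w$ is a subsolution of \eqref{eq:HJe}, while $(x,t)\mapsto T_t^-w(\cdot,0)(x)$ is the solution of \eqref{eq:HJe} with the same initial datum $w(\cdot,0)$; hence Proposition~\ref{prop:comparison principle} yields $T_t^-w(\cdot,0)(x)\geqslant w(x,t)$ on $\mathbb{S}\times[0,+\infty)$. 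On the other hand, from the representation $T_t^-\varphi(x)=\inf_{y\in\mathbb{S}}h_{y,\varphi(y)}(x,t)$ and $w(x_0,0)=u_0(x_0)$ one gets $T_t^-w(\cdot,0)(x_0)\leqslant h_{x_0,w(x_0,0)}(x_0,t)=h_{x_0,u_0(x_0)}(x_0,t)$. Chaining the two inequalities, $h_{x_0,u_0(x_0)}(x_0,t)\geqslant w(x_0,t)$ for all $t>0$. Since $w(x_0,t)=u_0(x_0)+\epsilon\rho(x_0)\bigl(1+\sin(-\tfrac{\pi}{2}+\tfrac{2\pi}{Z}t)\bigr)$ with $Z\neq0$ and $\rho(x_0)>0$ (Lemma~\ref{0}), its $\limsup$ as $t\to+\infty$ equals $u_0(x_0)+2\epsilon\rho(x_0)$, so $\limsup_{t\to+\infty}h_{x_0,u_0(x_0)}(x_0,t)\geqslant u_0(x_0)+2\epsilon\rho(x_0)>u_0(x_0)$.

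For the $\liminf$ I would use the Appendix: $u_0\in C^\infty$ and $\Lambda_{u_0}=\{(x,u_0'(x),u_0(x)):x\in\mathbb{S}\}$ is the Aubry set, which is a $\mathcal{T}$-periodic orbit of the contact flow $\Phi_t^H$. Thus the characteristic through $(x_0,u_0'(x_0),u_0(x_0))$ is $\mathcal{T}$-periodic, and for each $n\in\mathbb{N}$ its arc over $[0,n\mathcal{T}]$ joins $(x_0,u_0(x_0))$ to itself; the variational characterization of the implicit action functions in \cite{WWY} then forces $h_{x_0,u_0(x_0)}(x_0,n\mathcal{T})\leqslant u_0(x_0)$. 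Since $u_0$ is a fixed point of $T_t^-$, one also has $h_{x_0,u_0(x_0)}(x_0,t)\geqslant\inf_{y\in\mathbb{S}}h_{y,u_0(y)}(x_0,t)=T_t^-u_0(x_0)=u_0(x_0)$ for all $t$, so in fact $h_{x_0,u_0(x_0)}(x_0,n\mathcal{T})=u_0(x_0)$. Combining with the previous paragraph, $\liminf_{t\to+\infty}h_{x_0,u_0(x_0)}(x_0,t)\leqslant u_0(x_0)<u_0(x_0)+2\epsilon\rho(x_0)\leqslant\limsup_{t\to+\infty}h_{x_0,u_0(x_0)}(x_0,t)$, which is the claim.

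I expect the delicate point to be the inequality $h_{x_0,u_0(x_0)}(x_0,n\mathcal{T})\leqslant u_0(x_0)$: it relies on the variational (implicit Lax--Oleinik) characterization of the action functions from \cite{WWY} together with the facts established in the Appendix that $\Lambda_{u_0}$ is exactly a $\mathcal{T}$-periodic orbit whose $x$-projection is onto $\mathbb{S}$, so that the closed characteristic issued from $(x_0,u_0'(x_0),u_0(x_0))$ is an admissible (indeed optimal) curve connecting $(x_0,u_0(x_0))$ to itself over each interval of length $n\mathcal{T}$. The $\limsup$ half is essentially routine once Lemma~\ref{lem:thm2-1} and the formula $T_t^-\varphi=\inf_y h_{y,\varphi(y)}(\cdot,t)$ are in hand.
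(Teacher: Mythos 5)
Your argument is correct, and its main half is exactly the paper's: you trap $h_{x_0,u_0(x_0)}(x_0,t)\geqslant w(x_0,t)$ via the representation formula $T^-_t\varphi=\inf_yh_{y,\varphi(y)}(\cdot,t)$, the fact that $w(x_0,0)=u_0(x_0)$, and the comparison principle applied to the periodic subsolution of Lemma~\ref{lem:thm2-1}, which gives $\limsup_{t\to+\infty}h_{x_0,u_0(x_0)}(x_0,t)\geqslant u_0(x_0)+2\epsilon\rho(x_0)$ just as in the paper (the paper evaluates along $t=k\mathcal{T}+\mathcal{T}/2$, which is the same computation). The difference is in the $\liminf$ half: the paper simply quotes \cite[(8)]{WYZ2023} for the identity $u_0(x_0)=\liminf_{t\to+\infty}h_{x_0,u_0(x_0)}(x_0,t)$, whereas you rederive it from the Appendix (the Aubry orbit is a $\mathcal{T}$-periodic globally calibrated curve through $x_0$) together with $h_{x_0,u_0(x_0)}(x_0,t)\geqslant T^-_tu_0(x_0)=u_0(x_0)$. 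That route works, but be careful with your phrase ``admissible (indeed optimal) curve'': because the variational principle of \cite{WWY} is implicit, plugging the closed characteristic into it bounds $h_{x_0,u_0(x_0)}(x_0,n\mathcal{T})$ by $u_0(x_0)+\int_0^{n\mathcal{T}}L\big(x(s),\dot x(s),h_{x_0,u_0(x_0)}(x(s),s)\big)\,ds$, with $h$ itself (not $u_0$) in the last slot, and absent any monotonicity in $u$ you cannot just replace it by $u_0$ and invoke calibration. The step is rescued by the standard fact that calibrated curves realize the implicit action of a fixed point, i.e. $u_0(\gamma(t))=h_{\gamma(0),u_0(\gamma(0))}(\gamma(t),t)$, proved by combining the above inequality with $h\geqslant u_0$ along the curve and a Gronwall argument using (H3); this is exactly the content behind \cite[(8)]{WYZ2023}, so your proof amounts to unpacking the paper's citation, and it should either cite that identity or include the Gronwall step explicitly.
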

\begin{proof}
The proof is similar with \cite[Lemma 2.3]{WYZ2023}. By Proposition \ref{prop:comparison principle}, we get that
	$$
		h_{x_0,u_0(x_0)}(x,t)=h_{x_0,w(x_0,0)}(x,t)\geqslant \inf_{y\in \mathbb{S}} h_{y,w(y,0)}(x,t) = T^-_t w(x,0) \geqslant  w (x,t).
		$$
		This implies that  for any $\epsilon\in (0, \widetilde  \epsilon_1] $,
	\begin{align*}
		\limsup_{t\to +\infty} h_{x_0,u_0(x_0)}(x_0,t) \geqslant &\,   \limsup_{t\to +\infty} w (x_0,t) \geqslant \lim _{\mathbb{N} \ni k\to +\infty} w (x_0, k\mathcal{T}  + \frac{\mathcal{T}}{2}) \\
		 =&\, w (x_0, \frac{\mathcal{T}}{2})= u_0(x_0)+\epsilon \rho(x_0) +\epsilon \rho(x_0)  \sin \frac{\pi}{2}  = u_0(x_0) +2\epsilon \rho(x_0) \\
		 \geqslant &\, u_0(x_0)+ 2\epsilon M_2 > u_0(x_0).
	\end{align*}
	 where $M_2$ is as in \eqref{eq:def-M1-M2}.
	 
	Finally, \cite[(8)]{WYZ2023} shows that 
	$$
	u_0(x_0)=\liminf_{t\to +\infty} h_{x_0,u_0(x_0)}(x_0,t),	
	$$
		which completes the proof.	
		\end{proof}
		
		\medskip
		\begin{lemma}\label{lem:4.3}  \cite[Lemma 2.4]{WYZ2023} \label{lem:thm2-4}
			For any $x_0\in \mathbb{S}$, 
		$$
	u(x,t):=\lim_{n\to +\infty}h_{x_0,u_0(x_0)}(x,n\mathcal{T}+t)
	$$
	is a nontrivial $\mathcal{T}$-periodic viscosity solution of equation \eqref{eq:HJe}.
		\end{lemma}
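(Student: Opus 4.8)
The plan is to show that the sequence $\phi_n:=h_{x_0,u_0(x_0)}(\cdot,n\mathcal T)\in C(\mathbb S,\R)$, $n\ge 1$, converges uniformly to a fixed point $\phi_\infty$ of $T^-_{\mathcal T}$; then $u(x,t):=T^-_t\phi_\infty(x)$ will be the desired solution. I would use two structural facts about the implicit action functions recalled in \cite{WWY1} and proved in the Appendix: (i) the semigroup identity $h_{x_0,u_0(x_0)}(x,t+s)=T^-_t\big[h_{x_0,u_0(x_0)}(\cdot,s)\big](x)$, so that $\phi_{n+1}=T^-_{\mathcal T}\phi_n$ and $h_{x_0,u_0(x_0)}(x,n\mathcal T+t)=T^-_t\phi_n(x)$; and (ii) $h_{x_0,u_0(x_0)}(x_0,\mathcal T)=u_0(x_0)$, which holds because, under \eqref{A}, the Aubry set is a single $\mathcal T$-periodic orbit of $\Phi^H_t$ through $x_0\in\mathcal A=\mathbb S$ along which $u_0$ is calibrated. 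I would also use that $T^-_t$ is order preserving and continuous on $C(\mathbb S,\R)$ with the regularizing estimates of \cite{WWY1}, and that $v\mapsto h_{x_0,v}(x,t)$ is increasing.

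First I would get two-sided control on $\phi_n$. Since $u_0$ solves \eqref{eq:HJs} it is a common fixed point of $\{T^-_t\}$, hence $u_0=T^-_{n\mathcal T}u_0=\inf_y h_{y,u_0(y)}(\cdot,n\mathcal T)\le h_{x_0,u_0(x_0)}(\cdot,n\mathcal T)=\phi_n$, so $\phi_n\ge u_0$ for all $n$. For monotonicity, take $y=x_0$ in $T^-_{\mathcal T}\phi_1(x)=\inf_y h_{y,\phi_1(y)}(x,\mathcal T)$; since $\phi_1(x_0)=u_0(x_0)$ by (ii), this gives $\phi_2=T^-_{\mathcal T}\phi_1\le h_{x_0,\phi_1(x_0)}(\cdot,\mathcal T)=h_{x_0,u_0(x_0)}(\cdot,\mathcal T)=\phi_1$, and applying the order-preserving operator $T^-_{\mathcal T}$ repeatedly yields $\phi_{n+1}\le\phi_n$ for every $n$. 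Thus $\phi_n\downarrow\phi_\infty$ pointwise with $u_0\le\phi_\infty$ and $\phi_\infty(x_0)=u_0(x_0)$ (as $u_0(x_0)\le\phi_n(x_0)\le\phi_1(x_0)=u_0(x_0)$). Next, for $n\ge 2$ each $\phi_n=T^-_{\mathcal T}\phi_{n-1}$ is the image under $T^-_{\mathcal T}$ of a function in the fixed bounded set $\{u_0\le\varphi\le\phi_1\}$, so by the regularizing estimates for $\{T^-_t\}$ the family $\{\phi_n\}_{n\ge 2}$ is equi-Lipschitz; hence $\phi_\infty$ is Lipschitz and, $\phi_n$ being monotone, Dini's theorem upgrades pointwise to uniform convergence on $\mathbb S$. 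Passing to the limit in $\phi_{n+1}=T^-_{\mathcal T}\phi_n$ by continuity of $T^-_{\mathcal T}$ then gives $\phi_\infty=T^-_{\mathcal T}\phi_\infty$.

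With $u(x,t):=T^-_t\phi_\infty(x)$, the theory of \cite{WWY1} shows $u$ is the viscosity solution of \eqref{eq:HJe} with datum $\phi_\infty$; the fixed-point relation gives $u(x,t+\mathcal T)=T^-_t(T^-_{\mathcal T}\phi_\infty)(x)=u(x,t)$, so $u$ is $\mathcal T$-periodic, and $h_{x_0,u_0(x_0)}(x,n\mathcal T+t)=T^-_t\phi_n(x)\to T^-_t\phi_\infty(x)=u(x,t)$ using the uniform convergence of $\phi_n$ and continuity of $T^-_t$, which identifies the limit in the statement. For nontriviality I would compare with the periodic subsolution $w$ of Lemma \ref{lem:thm2-1}: from the inequality $h_{x_0,u_0(x_0)}(x,t)\ge w(x,t)$ used in the proof of Lemma \ref{lem:4.2} together with the $\mathcal T$-periodicity of $w$, letting $n\to\infty$ gives $u(x_0,\mathcal T/2)\ge w(x_0,\mathcal T/2)=u_0(x_0)+2\epsilon\rho(x_0)>u_0(x_0)=\phi_\infty(x_0)=u(x_0,0)$, so $t\mapsto u(x_0,t)$ is nonconstant and $u$ is a nontrivial $\mathcal T$-periodic viscosity solution.

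The main obstacle I anticipate is the monotonicity step, namely reducing everything to the identity $h_{x_0,u_0(x_0)}(x_0,\mathcal T)=u_0(x_0)$: this is the only place where the description of the Aubry set as a single $\mathcal T$-periodic calibrated orbit through $x_0$ (proved in the Appendix) and the geometry specific to $\mathbb S$ under \eqref{A} really enter. Once that identity and the equi-Lipschitz bound from \cite{WWY1} are in hand, the convergence, periodicity, identification of the limit, and nontriviality are all routine, and the argument parallels \cite[Lemma 2.4]{WYZ2023}.
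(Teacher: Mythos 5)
Your proposal is correct, but note that the paper itself gives no proof of this lemma: it is stated with a citation to \cite[Lemma 2.4]{WYZ2023}, so the comparison is really with that cited argument, which your monotone-iteration scheme reconstructs faithfully. Indeed the cited proof also rests on the Markov property of the implicit action functions (your identity (i), from \cite{WWY1,WWY2}) and on the calibration identity along the periodic Aubry orbit, $h_{x_0,u_0(x_0)}(x_0,k\mathcal T)=u_0(x_0)$ (your identity (ii)), which is exactly the content of \cite[(8)]{WYZ2023} invoked in the proof of Lemma \ref{lem:4.2} here; it then obtains the limit by monotonicity plus the equi-Lipschitz regularization of $T^-_t$, and periodicity/solution property by passing to the limit, just as you do via the fixed point $\phi_\infty=T^-_{\mathcal T}\phi_\infty$. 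The only genuine difference is at the nontriviality step: the paper's scheme deduces it from Lemma \ref{lem:4.2} (the $\liminf<\limsup$ statement), while you rerun the underlying comparison $h_{x_0,u_0(x_0)}(x,t)\geqslant w(x,t)$ with the $\mathcal T$-periodic subsolution $w$ of Lemma \ref{lem:thm2-1} to get $u(x_0,\mathcal T/2)\geqslant u_0(x_0)+2\epsilon\rho(x_0)>u_0(x_0)=u(x_0,0)$; these are equivalent, and your route has the small advantage of exhibiting directly that $t\mapsto u(x_0,t)$ is nonconstant (hence has positive minimal period), without detouring through the $\liminf/\limsup$ formulation. Two points of attribution to fix rather than mathematical gaps: the identity (ii) is not proved in the Appendix of this paper (the Appendix only gives the structure of $\mathcal A$ and the smoothness of $u_0$); it should be quoted from \cite{WWY2} or \cite[(8)]{WYZ2023}, and likewise the semigroup/Markov identity $h_{x_0,u_0(x_0)}(x,t+s)=T^-_t\big[h_{x_0,u_0(x_0)}(\cdot,s)\big](x)$ and the regularizing Lipschitz estimate for $T^-_{\mathcal T}$ need explicit citations to \cite{WWY,WWY1}.
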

		
		\medskip
		\begin{lemma}\label{lem:4.4} \cite[Lemma 2.5]{WYZ2023} \label{lem:thm2-5}
			 If  equation \eqref{eq:HJe} has a $\mathcal{T}$-periodic viscosity solution, then for each $n\in \mathbb{N}$, equation \eqref{eq:HJe} has infinitely many  $\frac{\mathcal{T}}{n}$-periodic viscosity solutions.
		\end{lemma}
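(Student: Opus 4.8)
The plan is to reduce this to a rerun of the scheme of Lemmas~\ref{lem:4.2}--\ref{lem:4.3} with the period $\mathcal{T}$ replaced by $\mathcal{T}/n$; indeed the statement is \cite[Lemma 2.5]{WYZ2023}, whose argument uses only the variational representation $T^-_t\varphi=\inf_y h_{y,\varphi(y)}$ and properties of the implicit action functions, none of which require monotonicity of $H$ in $u$. Fix $n\in\mathbb{N}$. First I would produce a $\tfrac{\mathcal{T}}{n}$-periodic subsolution $w_n$ of \eqref{eq:HJe} with $w_n(x_0,0)=u_0(x_0)$ by repeating the construction \eqref{eq:w} with the phase $f$ replaced by $nf$: since $f(1)-f(0)=2\pi$, the function $nf$ is still well defined on $\mathbb{S}$ modulo $2\pi$, the cancellations $\tfrac{2\pi n}{Z}+(nf)'(x)B(x)=0$ and $\tfrac{\partial H}{\partial u}(x,u_0',u_0)+\tfrac{\rho'(x)}{\rho(x)}B(x)=\mu$ still hold, and the remainder estimate of Lemma~\ref{lem:thm2-1} goes through after shrinking the admissible $\epsilon$ (now depending on $n$). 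One then has $w_n(x_0,t)=u_0(x_0)+\epsilon\rho(x_0)\bigl(1+\sin(-\tfrac{\pi}{2}+\tfrac{2\pi n}{Z}t)\bigr)$, so $\limsup_{t\to+\infty}w_n(x_0,t)=u_0(x_0)+2\epsilon\rho(x_0)>u_0(x_0)$. (Alternatively, if $v$ is any $\mathcal{T}$-periodic viscosity solution of \eqref{eq:HJe}, then $\max_{0\le j<n}v(\cdot,\cdot+j\mathcal{T}/n)$ is automatically a $\tfrac{\mathcal{T}}{n}$-periodic subsolution, since a finite maximum of subsolutions is a subsolution; this is where the hypothesis of the lemma could enter directly.)

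Next, exactly as in Lemma~\ref{lem:4.2}, comparison against $w_n$ yields $h_{x_0,u_0(x_0)}(x,t)\ge T^-_t w_n(x,0)\ge w_n(x,t)$, hence $\limsup_{t\to+\infty}h_{x_0,u_0(x_0)}(x_0,t)>u_0(x_0)=\liminf_{t\to+\infty}h_{x_0,u_0(x_0)}(x_0,t)$; and, as in Lemma~\ref{lem:4.3} with $\mathcal{T}$ replaced by $\mathcal{T}/n$, the limit $u^{(n)}(x,t):=\lim_{k\to+\infty}h_{x_0,u_0(x_0)}\bigl(x,k\tfrac{\mathcal{T}}{n}+t\bigr)$ exists and defines a nontrivial $\tfrac{\mathcal{T}}{n}$-periodic viscosity solution of \eqref{eq:HJe}. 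For the multiplicity, I would use that \eqref{eq:HJe} is autonomous: every time translate $u^{(n)}(\cdot,\cdot+s)$ is again a $\tfrac{\mathcal{T}}{n}$-periodic viscosity solution, and since $u^{(n)}$ is nontrivial its group of periods is $\tau^\ast\mathbb{Z}$ for some $\tau^\ast>0$, so the translates with $s\in[0,\tau^\ast)$ are pairwise distinct --- a continuum, hence infinitely many, of $\tfrac{\mathcal{T}}{n}$-periodic solutions for the given $n$. (Varying $x_0\in\mathbb{S}$ gives another source of infinitely many.)

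The only nonroutine point is the existence of the limit $u^{(n)}$, i.e. the monotonicity and boundedness of $k\mapsto h_{x_0,u_0(x_0)}(x,k\tfrac{\mathcal{T}}{n}+t)$, which is the content of \cite[Lemma 2.4]{WYZ2023} transplanted to the smaller period. Monotonicity follows from $h_{x_0,u_0(x_0)}(x_0,\mathcal{T}/n)\ge w_n(x_0,\mathcal{T}/n)=u_0(x_0)$ together with the dynamic-programming identity for implicit action functions and the monotonicity of $a\mapsto h_{x_0,a}$; the lower bound is $h_{x_0,u_0(x_0)}\ge u_0$ (because $T^-_t u_0=u_0$), while the upper bound must be read off from the behaviour of the action functions at the critical level $0$. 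This last step is where one has to be careful in the non-monotone case --- but the implicit action function calculus of \cite{WWY,WWY1} was set up without any monotonicity assumption on $H$ in $u$, so the estimates of \cite{WYZ2023} carry over unchanged, and that is what I would invoke.
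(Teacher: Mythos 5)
The paper offers no proof of this lemma at all (it is quoted from \cite[Lemma 2.5]{WYZ2023}), so your attempt can only be measured on its own terms, and there it has a genuine gap at the central step. The object $u^{(n)}(x,t):=\lim_{k\to+\infty}h_{x_0,u_0(x_0)}\bigl(x,k\tfrac{\mathcal{T}}{n}+t\bigr)$ in general does not exist for $n\geqslant 2$: along $k=nm$ the sequence converges to the function $u(x,t)$ of Lemma~\ref{lem:4.3}, while along $k=nm+1$ it converges to $u(x,t+\tfrac{\mathcal{T}}{n})$, so existence of the full limit would force the $\mathcal{T}$-periodic solution $u$ to already be $\tfrac{\mathcal{T}}{n}$-periodic; since your argument is uniform in $n$, it would make $u$ independent of $t$, contradicting the nontriviality established through Lemma~\ref{lem:4.2}. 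The monotonicity mechanism you invoke does not rescue this: the step-$\mathcal{T}$ monotonicity behind Lemma~\ref{lem:4.3} rests on the Aubry orbit returning to $x_0$ exactly at time $\mathcal{T}$, which gives $h_{x_0,u_0(x_0)}(x_0,\mathcal{T})=u_0(x_0)$ and then, via the Markov property $h_{x_0,u_0(x_0)}(x,t+\mathcal{T})\leqslant h_{x_0,\,h_{x_0,u_0(x_0)}(x_0,\mathcal{T})}(x,t)$, a monotone bounded sequence. At time $\tfrac{\mathcal{T}}{n}$ the orbit does not return, and your inequality $h_{x_0,u_0(x_0)}(x_0,\tfrac{\mathcal{T}}{n})\geqslant u_0(x_0)$ points the wrong way (it only bounds one competitor in an infimum from below), so no monotonicity at step $\tfrac{\mathcal{T}}{n}$ follows. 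Note also that your main route never uses the hypothesis of the lemma (existence of a $\mathcal{T}$-periodic solution) but instead needs $\mu<0$ and the subsolution construction of Lemma~\ref{lem:thm2-1}; at best this proves a different statement than the one asserted.

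The hypothesis should be used directly, and the right elementary device is a minimum over time translates, not the maximum you mention in passing (the maximum is only a subsolution and does not close the argument). If $v$ is a $\mathcal{T}$-periodic viscosity solution of \eqref{eq:HJe}, then the representation $T^-_t\varphi=\inf_{y}h_{y,\varphi(y)}(\cdot,t)$ from \cite{WWY1}, together with the monotonicity of $u\mapsto h_{y,u}$, shows that $T^-_t$ commutes with finite minima; hence $\psi:=\min_{0\leqslant j<n}v(\cdot,j\tfrac{\mathcal{T}}{n})$ satisfies $T^-_{\mathcal{T}/n}\psi=\min_{0\leqslant j<n}v(\cdot,(j+1)\tfrac{\mathcal{T}}{n})=\psi$, so $(x,t)\mapsto T^-_t\psi(x)$ is a $\tfrac{\mathcal{T}}{n}$-periodic viscosity solution, with no monotonicity of $H$ in $u$ needed. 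Infinitely many such solutions are then obtained, as you suggest at the end, by varying the base point $x_0$ in the family produced by Lemma~\ref{lem:4.3} or by taking time translates of a nontrivial one; but the core construction of a single $\tfrac{\mathcal{T}}{n}$-periodic solution must go through the given $\mathcal{T}$-periodic solution in this way, not through a rerun of the $h$-limit with a smaller period.
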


As a direct consequence of Lemma \ref{lem:thm2-4} and Lemma \ref{lem:thm2-5},  we show that there exist infinitely many nontrivial time-periodic viscosity solutions   of equation \eqref{eq:HJs} in $  \Omega_{\delta_0} $ for    $\mu<0$, which completes the proof. \qed
 
\section{Examples}

Here are some examples of Theorem \ref{thm1} and Theorem \ref{thm2}.
 
\begin{example}
	We focus on the following contact  Hamilton-Jacobi equations \eqref{eq:HJe} with Hamiltonian
	$$
	H(x,p,u):=p^2+ p+\lambda(x)u, \quad (x,p,u)\in \mathbb{S}\times \R\times \R .
	$$
  Note that $u_0\equiv 0$ is a solution of \eqref{eq:HJs} and
$$
\mu = \int_{0}^{1} \lambda (\tau) \, d\tau, \quad \rho(x)= \exp\Big\{ \int_0^x  \mu- \lambda(\tau)  \, d \tau \Big\}.
$$
Then applying Theorem \ref{thm1} and Theorem \ref{thm2}, we have
	\begin{itemize}
	\item [(1)] If $\mu<0$, then $u_0$ is Lyapunov unstable and there exist infinitely many nontrivial time-periodic viscosity solutions of equation \eqref{eq:HJe}. 
	\item [(2)] If $\mu>0$, then $u_0$ is  asymptotic stable  and for any $\varphi\in C(\mathbb{S} ,\R )$ satisfying $ u_0-\delta_0  \leqslant \varphi\leqslant u_0+\delta_0 $,
	  $$
	\limsup_{t\to +\infty} \frac{\ln \| T_t^- \varphi-u_0 \|_\infty }{t} \leqslant  -\mu,  
	$$
	where 
	$$
	\delta_0=\frac{ \mu \cdot \Big( \displaystyle  \min_{x\in \mathbb{S}}\rho \Big)^2  }{\ 4 \ \Big( \| \rho \|_\infty +\| \rho' \|_\infty \Big) \Big(\displaystyle  \min_{x\in \mathbb{S}} \rho  + \| \rho \|_\infty +\| \rho' \|_\infty \Big)}\ .
	$$
\end{itemize} 
\end{example}

\begin{example}
	Consider the following contact  Hamilton-Jacobi equations \eqref{eq:HJe} with Hamiltonian
	$$
	H(x,p,u):=p^2+V(x) p+\lambda(x)\cdot (\sqrt{u^2+1}-\sqrt{2} ), \quad \forall (x,p,u)\in \mathbb{S}\times \R\times \R.
	$$
Here $V(x)\in C(\mathbb{S}, \R) $ satisfies $ V(x) \neq 0$ for any $x\in \mathbb{S}$. It is clear that $u_1\equiv 1$ and $u_2\equiv -1$ are two solutions of \eqref{eq:HJs} and
	$$
 \mu_1=  \frac{ \int_0^1 \frac{\sqrt{2}}{2 } \cdot \lambda(\tau)  V(\tau )^{-1} \, d\tau }{ \int_0^1   V(\tau )^{-1} \, d\tau } ,\quad \mu_2=\frac{\int_0^1 -\frac{\sqrt{2}}{2 } \cdot \lambda(\tau) V(\tau )^{-1} \, d\tau}{{ \int_0^1   V(\tau )^{-1} \, d\tau } } 
	$$
 Applying Theorem \ref{thm1} and note that   $\frac{ \int_0^1  \cdot \lambda(\tau)  V(\tau )^{-1} \, d\tau }{ \int_0^1   V(\tau )^{-1} \, d\tau }$ and $ \int_0^1 \frac{\lambda(\tau)}{|V(\tau )|} d\tau $   have the same sign, 
	\begin{itemize}
		\item[(1)]  If $ \int_0^1 \frac{\lambda(\tau)}{|V(\tau )|} d\tau >0$, then $u_1$ is local asymptotic stable and  $u_2$ is Lyapunov unstable.
        \item[(2)]  If $ \int_0^1 \frac{\lambda(\tau)}{|V(\tau )|} d\tau <0$, then $u_1$ is Lyapunov unstable and $u_2$ is local asymptotic stable.
        \item[(3)]  If $ \int_0^1 \frac{\lambda(\tau)}{|V(\tau )|} d\tau \neq 0$, then  equation \eqref{eq:HJe} admit infinitely many nontrivial time-periodic   solutions.
	\end{itemize}
	 \end{example}

\section{Appendix}\label{section:Appendix}
We provide some explanations of the statements of Theorem \ref{thm1} and Theorem \ref{thm2}.

We use $\mathcal{L}: T^*\mathbb{S}\rightarrow T\mathbb{S}$ to denote  the Legendre transform. Let
	$\bar{\mathcal{L}}:=(\mathcal{L}, Id)$, where $Id$ denotes the identity map from $\R$ to $\R$. Then
	\[
	\bar{\mathcal{L}}:T^*\mathbb{S}\times\R\to T\mathbb{S}\times\R,\quad (x,p,u)\mapsto \left(x,\frac{\partial H}{\partial p}(x,p,u),u\right)
	\]
	is a diffeomorphism. Using $\bar{\mathcal{L}}$, we can define
	the contact Lagrangian $L(x,\dot{x},u)$ associated to $H(x,p,u)$ as
	$$
	L(x,\dot x,u):=\sup_{p\in \R} \{\dot x p -H(x,p,u) \},\quad (x,\dot{x},u)\in \mathbb{S}\times\R\times\R.
	$$
	Then $L(x,\dot{x}, u)$ and $H(x,p,u)$ are Legendre transforms of each other, depending on conjugate variables $\dot{x}$ and $p$ respectively. Let $\Phi^H_t$ and $\Phi^L_t$ denote the local contact Hamiltonian flow and Lagrangian flow, respectively.
\begin{lemma}\label{lem00}\cite[Lemma 2.1, 2.2]{WYZ2023}
	Assume (H1)-(H3) and \eqref{A}. Then  $u_-:= u_0$ is of class $C^{\infty}$.
\end{lemma}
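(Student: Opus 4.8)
The plan is to prove the lemma in two stages: first use the weak KAM / characteristics picture to upgrade $u_-$ from a Lipschitz viscosity solution of \eqref{eq:HJs} to a genuinely $C^1$ one, and then bootstrap from $C^1$ to $C^\infty$ by reading \eqref{eq:HJs} as an ordinary differential equation. Recall (from the variational principle of \cite{WWY} and the weak KAM theory of \cite{WWY1,WWY3}) that $u_-$ exists, is Lipschitz on $\mathbb{S}$, and that through every $x_0\in\mathbb{S}$ there is a backward calibrated curve $\gamma_{x_0}\colon(-\infty,0]\to\mathbb{S}$ with $\gamma_{x_0}(0)=x_0$ such that $u_-$ is differentiable at $\gamma_{x_0}(s)$ for every $s<0$ and $s\mapsto\bigl(\gamma_{x_0}(s),\,u_-'(\gamma_{x_0}(s)),\,u_-(\gamma_{x_0}(s))\bigr)$ is an orbit of the contact flow $\Phi^H_s$ of \eqref{b6}; in particular this orbit lies in $\Lambda_{u_-}$ for $s<0$, and all such orbits stay in the compact set $K:=\mathbb{S}\times[-R,R]\times[\min_{\mathbb{S}}u_-,\max_{\mathbb{S}}u_-]$ with $R:=\operatorname{Lip}(u_-)$. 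Since a point lying strictly inside a calibrated curve is a point of differentiability of $u_-$, it suffices to show that each $\gamma_{x_0}$ winds around $\mathbb{S}$, i.e.\ that $\gamma_{x_0}\bigl((-\infty,0)\bigr)=\mathbb{S}$.

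For the first stage, note that along any calibrated curve $\dot\gamma_{x_0}(s)=\frac{\partial H}{\partial p}\bigl(\gamma_{x_0}(s),u_-'(\gamma_{x_0}(s)),u_-(\gamma_{x_0}(s))\bigr)$, which by \eqref{A} is nonzero for every $s<0$; being a continuous function of $s$ it keeps a constant sign, so $\gamma_{x_0}$ is strictly monotone on the universal cover. If it failed to wind around $\mathbb{S}$, its lift would converge to a finite limit as $s\to-\infty$, forcing $\dot\gamma_{x_0}(s)\to0$, so the $\alpha$-limit set $\Omega$ of the corresponding contact orbit would be a nonempty compact invariant set on which $\dot x\equiv0$, i.e.\ a set of equilibria of $\Phi^H_t$. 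But $\alpha$-limit sets of backward calibrated curves lie in the Aubry set, which in turn is contained in $\Lambda_{u_-}$ (this is exactly where the argument of \cite[Lemmas 2.1, 2.2]{WYZ2023} carries over verbatim, the sign of $\partial H/\partial u$ playing no role); hence $\Omega\subset\Lambda_{u_-}$ and \eqref{A} forces $\frac{\partial H}{\partial p}\neq0$ on $\Omega$, a contradiction. Therefore $\gamma_{x_0}$ winds around $\mathbb{S}$, every $x\in\mathbb{S}$ is a differentiability point of $u_-$, and $u_-\in C^1(\mathbb{S})$; equivalently $\Lambda_{u_-}$ is a closed $C^1$ graph over $\mathbb{S}$, which is simultaneously the Aubry set and a periodic orbit of $\Phi^H_t$ (this also proves the companion claims stated in the Remark).

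For the second stage, $u_-$ now solves $H(x,u_-'(x),u_-(x))=0$ classically on $\mathbb{S}$. Fix $\bar x$, put $\bar p:=u_-'(\bar x)$ and $\bar u:=u_-(\bar x)$; by \eqref{A} we have $\frac{\partial H}{\partial p}(\bar x,\bar p,\bar u)\neq0$, so the implicit function theorem provides a neighbourhood of $(\bar x,\bar u)$ and a $C^\infty$ function $g$ with $H(x,g(x,u),u)\equiv0$ and $u_-'(x)=g(x,u_-(x))$ near $\bar x$. Thus $u_-$ locally satisfies the ODE $u'=g(x,u)$ with $C^\infty$ right-hand side, and the standard bootstrap ($u_-\in C^k\Rightarrow g(\cdot,u_-(\cdot))\in C^k\Rightarrow u_-\in C^{k+1}$) yields $u_-\in C^\infty$ near $\bar x$; since $\bar x$ is arbitrary, $u_-\in C^\infty(\mathbb{S})$.

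The main obstacle is the middle step, namely excluding a calibrated characteristic that ``stalls'' (whose $x$-speed tends to $0$) as $s\to-\infty$. The delicate point is that \eqref{A} controls only $\Lambda_{u_-}$ and not, a priori, its closure, so one cannot immediately conclude that the $\alpha$-limit set of a calibrated curve avoids the equilibria of $\Phi^H_t$; what makes the argument go through is the weak KAM fact that this $\alpha$-limit set actually lands inside $\Lambda_{u_-}$ itself (through the Aubry set). This is precisely the content imported from \cite[Lemmas 2.1, 2.2]{WYZ2023}.
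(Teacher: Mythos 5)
Your second stage (the implicit-function-theorem reduction $u_-'=g(x,u_-)$ plus bootstrap from $C^1$ to $C^\infty$) is fine and is in the same spirit as the smoothness step the paper delegates to \cite[Lemma 2.2]{WYZ2023}. The trouble is in your first stage. The load-bearing claim there --- that the $\alpha$-limit set of the backward calibrated curve through an \emph{arbitrary} point $x_0$ is contained in the Aubry set, hence in $\Lambda_{u_-}$ --- is precisely the statement one cannot take for granted here, and you only assert that the argument of \cite[Lemmas 2.1, 2.2]{WYZ2023} ``carries over verbatim, the sign of $\partial H/\partial u$ playing no role''. That reference works under $-\kappa\leqslant\partial H/\partial u\leqslant-\delta<0$; the whole point of the present lemma is to redo the regularity statement without any monotonicity, so importing, without proof, a nontrivial dynamical fact about limit sets of calibrated curves under bare (H1)--(H3) is a genuine gap rather than a citation. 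You yourself identify the danger correctly: \eqref{A} controls only $\Lambda_{u_-}$ and not its closure, so if the inclusion of the $\alpha$-limit set in the Aubry set is not available, your ``stalling'' contradiction collapses --- a limit point has the form $(x^*,p,u_-(x^*))$ with $p$ only a reachable gradient of $u_-$ at $x^*$, where \eqref{A} says nothing about $\partial H/\partial p$.

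The paper's own proof is structured exactly so as to avoid this step: it never looks at calibrated curves through arbitrary points. It sets $u_+:=\lim_{t\to+\infty}T^+_tu_-$, invokes \cite[Theorems 1.2, 1.3]{WWY2} (valid under (H1)--(H3)) to get that the projected Aubry set $\mathcal{A}=\{u_-=u_+\}$ is nonempty, and runs the winding argument only along the global calibrated curve through an Aubry point; that orbit already lies in the compact invariant Aubry set, which sits inside (the Legendre image of) $\Lambda_{u_-}$, so \eqref{A} gives a nonvanishing speed along it and hence $\mathcal{A}=\mathbb{S}$. This yields $u_-=u_+$ on all of $\mathbb{S}$, so $u_0$ is simultaneously semiconcave and semiconvex, hence $C^{1,1}$ by \cite[Corollary 3.3.8]{CS}, and then \cite[Lemma 2.2]{WYZ2023} upgrades to $C^\infty$. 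If you wish to keep your route, you must either prove the $\alpha$-limit-set inclusion under (H1)--(H3) or, as the paper does, restrict attention to a curve already contained in the Aubry set. A smaller omission: for a merely Lipschitz function, differentiability at every point does not imply $C^1$; you need the semiconcavity of $u_-$ (which you never invoke) to pass from everywhere-differentiability to continuity of the derivative before your implicit-function-theorem step can start.
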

\begin{proof}
First, we claim that   equation \eqref{eq:HJs} admits a backward KAM solution $u_-$ and  a forward KAM solution $u_+$ satisfying $u_-=u_+=:u_0$.

It is clear that $u_-$ is also a	 backward weak KAM solution of  equation \eqref{eq:HJs}. Moreover, $ \displaystyle u_+:=\lim_{t\to +\infty}T^+_t u_-$ is a forward weak KAM solution.  Define the projected Aubry set by
		\[
		\mathcal{A}:=\{x\in\mathbb{S}:\ u_-(x)=u_+(x)\} \neq \emptyset.
		\]
	In view of  \cite[Theorem 1.2, Theorem 1.3]{WWY2}, $\mathcal{A}$ is nonempty. Take an arbitrary point $x\in \mathcal{A}$. Denote by $x(t)$  the global $(u_-,L,0)$-calibrated curve passing through $x$. 
		From assumption \eqref{A},	the contact Hamiltonian flow $\Phi^H_t$ has no fixed points on $\Lambda_{u_+} $. So, we get that
		\[
		\mathcal{A}=\{x(t):t\in\R\}=\mathbb{S},\quad \tilde{\mathcal{A}}=\{(x(t),\dot{x}(t),u_-(x(t))):\ t\in\R\},
		\]
		where $\tilde{\mathcal{A}}$ denotes the Aubry set in $T\mathbb{S}\times\R$  \cite[Definition 3.3]{WWY2}.   In this case, $u_-=u_+=:u_0$. 
		
		By the way,  notice that $u_-$ is semiconcave , $u_+$ is semiconvex and  
	a function is $C^{1,1}$ if and only if it is semiconcave and semiconvex  \cite[Corollary 3.3.8]{CS}, then $u_0$ is of class $C^{1,1}$. By \cite[Lemma 2.2]{WYZ2023}, $u_0$ is of class $C^{\infty}$.   The proof is complete.
\end{proof}
\begin{remark}
From the proof of Lemma \ref{lem00}, we get that 
	$\tilde{\mathcal{A}}=\{(x(t),\dot{x}(t),u_0(x(t))): t\in\R\}$.
	It is clear that $\{(x(t),\dot{x}(t),u_0(x(t))): t\in\R\}$ is an orbit of the contact Lagrangian flow $\Phi_t^L$. By Lemma \ref{lem00} again,  we also have that
	\[
	\Lambda_{u_-}=\bar{\mathcal{L}}^{-1}(\tilde{\mathcal{A}}).
	\]
	In view of assumption \eqref{A}, 
	\begin{align}\label{b1}
\dot{x}(t)=\frac{\partial H}{\partial p}\Big(\bar{\mathcal{L}}^{-1}\big(x(t),\dot{x}(t),u_0(x(t))\big)\Big)\neq 0,\quad \forall t\in\mathbb{R}.
	\end{align}
	By \eqref{b1} and the structure of the unit circle $\mathbb{S}$, one can deduce that $x(t)$ is $\mathcal{T}$-periodic for some $\mathcal{T}>0$ and thus $\tilde{\mathcal{A}}=\{(x(t),\dot{x}(t),u_0(x(t))): t\in\R\}$ is an orbit of the contact Lagrangian flow $\Phi_t^L$.
	
	Let  
$
		(x(t),p(t),u_0(x(t))):={\bar{\mathcal{L}}}^{-1}((x(t),\dot{x}(t),u_0(x(t))),
$
	then 
	\[
	\Lambda_{u_-}=\{(x(t),p(t),u_0(x(t))):\ t\in\mathbb{R}\}. 
	\]
 By  \eqref{b1}, one follows that
	$$
	\mathcal{T}=\big|\int_0^1  \big( B(\tau )\big)^{-1} d\tau \big| \in \R^+.
	$$
\end{remark}

	\begin{lemma}\label{lem1111}
	Define $\mu$ as in \eqref{eq:def-mu}, then
			$$
		\mu =\frac{1}{\mathcal{T} } \int_0^\mathcal{T} \frac{\partial H}{\partial u} ( x(s),d_x u_0(x(s)) , u_0(x(s)) ) ds  . 
		$$
	\end{lemma}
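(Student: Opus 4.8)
The plan is to express both sides of the claimed identity as integrals over one period of the Aubry orbit and to match them by a change of variables $s \mapsto x(s)$. The key fact is that along the Aubry orbit $x(\cdot)$ we have, by the first equation of the contact Hamiltonian system \eqref{b6} together with the definition \eqref{eq:def-B(x)} and the identity $\Lambda_{u_-}=\{(x(t),p(t),u_0(x(t)))\}$, that
$$
\dot x(s) = \frac{\partial H}{\partial p}\big(x(s),p(s),u_0(x(s))\big) = \frac{\partial H}{\partial p}\big(x(s),u_0'(x(s)),u_0(x(s))\big) = B(x(s)),
$$
where I use that $p(s)=u_0'(x(s))$ since $u_0$ is a $C^\infty$ solution and the orbit lies on $\Lambda_{u_0}$. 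By assumption \eqref{A}, $B$ never vanishes, so $\dot x(s)$ has constant sign; hence $s\mapsto x(s)$ is a diffeomorphism of $[0,\mathcal{T}]$ onto $[0,1]$ (up to orientation, since $x(\cdot)$ wraps once around $\mathbb{S}$ in time $\mathcal{T}$), with $ds = dx/B(x)$ when $B>0$ and $ds = -dx/B(x)$ when $B<0$.

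First I would treat the case $B>0$. Performing the substitution $x = x(s)$ in the right-hand side of the claimed formula gives
$$
\frac{1}{\mathcal{T}}\int_0^{\mathcal{T}} \frac{\partial H}{\partial u}\big(x(s),u_0'(x(s)),u_0(x(s))\big)\,ds
= \frac{1}{\mathcal{T}}\int_0^{1} \frac{\partial H}{\partial u}\big(\tau,u_0'(\tau),u_0(\tau)\big)\cdot\big(B(\tau)\big)^{-1}\,d\tau,
$$
and since in this case $\mathcal{T} = \big|\int_0^1 (B(\tau))^{-1}\,d\tau\big| = \int_0^1 (B(\tau))^{-1}\,d\tau$ (the integrand is positive), the right-hand side equals exactly the expression \eqref{eq:def-mu} defining $\mu$. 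Second, for the case $B<0$, both the numerator and the denominator in \eqref{eq:def-mu} acquire a sign change under the orientation-reversing substitution, and $\mathcal{T} = -\int_0^1 (B(\tau))^{-1}\,d\tau$; the two sign changes cancel and one again recovers \eqref{eq:def-mu}. So in both cases the right-hand side of the lemma coincides with $\mu$.

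The only subtle point — and the step I expect to need the most care — is justifying that $p(s) = u_0'(x(s))$ and that the Aubry orbit projects bijectively onto $\mathbb{S}$ with period exactly $\mathcal{T}=\big|\int_0^1 (B(\tau))^{-1}d\tau\big|$; but this is precisely what is established in Lemma \ref{lem00} and the remark following it (in particular equation \eqref{b1} and the displayed formula for $\mathcal{T}$ there), so I would simply cite those. Everything else is a one-line change of variables. A clean way to present it is to define $\phi(s):=x(s)$, note $\phi'(s)=B(\phi(s))\neq 0$, so $\phi:[0,\mathcal{T}]\to[0,1]$ is a $C^\infty$ bijection, substitute, and read off the result against \eqref{eq:def-mu}; I would write the computation once for $B>0$ and remark that the $B<0$ case is identical after tracking signs.
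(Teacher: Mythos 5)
Your proposal is correct and follows essentially the same route as the paper: both prove the identity by the change of variables $s\mapsto x(s)$ along the periodic Aubry orbit, using $\dot x(s)=B(x(s))\neq 0$ (from assumption \eqref{A} and the Appendix results identifying $p(s)=u_0'(x(s))$ and $\mathcal{T}=\bigl|\int_0^1 (B(\tau))^{-1}d\tau\bigr|$), treating the constant-sign cases of $B$ separately. The paper writes out the case $B>0$ and notes the other case is similar, which matches your sign-tracking argument.
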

	\begin{proof}
	Due to Lemma \ref{lem2.2}, $u_0$ is of class $C^\infty$.  By  assumption \eqref{A}  , we get 
	$$
	 \frac{\partial H}{\partial p} (x, \,  u_0'(x),u_0(x) )  >0, \quad \forall x\in \mathbb{S}, \quad \text{or} \quad  \frac{\partial H}{\partial p} (x, \,  u_0'(x),u_0(x) )  <0, \quad \forall x\in \mathbb{S} .
	$$
	If  $\frac{\partial H}{\partial p} (x, \,  u_0'(x),u_0(x) )  >0$ holds for any $   x\in \mathbb{S}$, then 
		\begin{align*}
&\,    \frac{1}{ \mathcal{T} } \, \int_0^{\mathcal{T}} \frac{\partial H}{\partial u} (x(s), \,  u_0'(x( s)),u_0(x( s) ) ) \ d  s \\ = &\,    \frac{1}{ \mathcal{T} } \, \int_0^{\mathcal{T}} \frac{ \frac{\partial H}{\partial u} (x(s), \,  u_0'(x(s)),u_0(x(s)) )}{\quad  \dot x(s)  \quad } \  d \,   x(s) \\ 
 =  &\, \frac{1}{ \mathcal{T} } \int_0^1 \frac{ \frac{\partial H}{\partial u} (x, \,  u_0'(x),u_0(x) )}{\quad \frac{\partial H}{\partial p} (x, \,  u_0'(x),u_0(x) )  \quad } \ d  x  \\
 =&\,\frac{ \int_0^1   \frac{\partial H}{\partial u} (x, \,  u_0'(x),u_0(x) ) \cdot \big( B(x )\big)^{-1}  \ d  x}{\int_0^1 \big( B(x )\big)^{-1} d x }   =\mu .
\end{align*}
 It is similar for the case that $\frac{\partial H}{\partial p} (x, \,  u_0'(x),u_0(x) )  <0$ holds for any $   x\in \mathbb{S}$. This completes the proof.
	\end{proof}
	
	  \bigskip

\noindent   {\bf There is no conflict of interest and no data in this paper.}
  
  	\medskip

	  \section*{Acknowledgements}
Jun Yan is supported by NSFC Grant Nos. 12171096,  12231010. Kai Zhao is supported by NSFC Grant No. 12301233.


%


\begin{thebibliography}{10}

\bibitem{Bar}
M.~Bardi and I.~Capuzzo-Dolcetta.
\newblock {\em Optimal Control and Viscosity Solutions of
  Hamilton-Jacobi-Bellman Equations}.
\newblock Systems and Control: Foundations \& Applications. Birkh{\"a}user
  Boston Inc., 1997.

\bibitem{CS}
P.~Cannarsa and C.~Sinestrari.
\newblock {\em Semiconcave functions, {H}amilton-{J}acobi equations, and
  optimal control}, volume~58 of {\em Progress in Nonlinear Differential
  Equations and their Applications}.
\newblock Birkh{\"a}user Boston, Inc., Boston, MA, 2004.

\bibitem{CL}
M.~G. Crandall and P.-L. Lions.
\newblock Viscosity solutions of {H}amilton-{J}acobi equations.
\newblock {\em Trans. Amer. Math. Soc.}, 277(1):1--42, 1983.

\bibitem{Fat-b}
A.~Fathi.
\newblock {W}eak {KAM} theorem in {L}agrangian dynamics.
\newblock Cambridge University Press, Cambridge.

\bibitem{I}
H.~Ishii.
\newblock Asymptotic solutions for large time of {H}amilton-{J}acobi
  equations: {I}nternational {C}ongress of {M}athematicians.
\newblock {\em Eur. Math. Soc.}, 3:213--227, 2006.

\bibitem{Ishii-WWY}
H.~Ishii, K.~Wang, L.~Wang, and J.~Yan.
\newblock {H}amilton--{J}acobi equations with their {H}amiltonians depending
  lipschitz continuously on the unknown.
\newblock {\em Communications in Partial Differential Equations},
  47(2):417--452, 02 2022.

\bibitem{L}
P.-L. Lions.
\newblock {\em Generalized solutions of {H}amilton-{J}acobi equations},
  volume~69 of {\em Research Notes in Mathematics}.
\newblock Pitman (Advanced Publishing Program), Boston, Mass.-London, 1982.

\bibitem{LPV}
P.-L. Lions, G.~Papanicolaou, and S.~Varadhan.
\newblock Homogenization of {H}amilton-{J}acobi equations.
\newblock {\em unpublished work}, 1987.

\bibitem{Ma}
R.~Ma\~n{\'e}.
\newblock Generic properties and problems of minimizing measures of
  {L}agrangian systems.
\newblock {\em Nonlinearity}, 9(2):273--310, 1996.

\bibitem{T}
H.~V. Tran.
\newblock {\em Hamilton-Jacobi equations: Theory and applications}, volume 213.
\newblock American Mathematical Soc., 2021.

\bibitem{WWY}
K.~Wang, L.~Wang, and J.~Yan.
\newblock Implicit variational principle for contact {H}amiltonian systems.
\newblock {\em Nonlinearity}, 30(2):492--515, 2017.

\bibitem{WWY2}
K.~Wang, L.~Wang, and J.~Yan.
\newblock Aubry--{M}ather {T}heory for {C}ontact {H}amiltonian {S}ystems.
\newblock {\em Comm. Math. Phys.}, 366(3):981--1023, 2019.

\bibitem{WWY1}
K.~Wang, L.~Wang, and J.~Yan.
\newblock Variational principle for contact {H}amiltonian systems and its
  applications.
\newblock {\em J. Math. Pures Appl.}, 123(9):167--200, 2019.

\bibitem{WWY3}
K.~Wang, L.~Wang, and J.~Yan.
\newblock Weak {KAM} solutions of {H}amilton-{J}acobi equations with decreasing
  dependence on unknown functions.
\newblock {\em Journal of Differential Equations}, 286:411--432, 2021.

\bibitem{WYZ2023}
K.~Wang, J.~Yan, and K.~Zhao.
\newblock Time periodic solutions of hamilton-jacobi equations with autonomous
  hamiltonian on the circle.
\newblock {\em Journal de Math{\'e}matiques Pures et Appliqu{\'e}es},
  171:122--141, 2023.

\end{thebibliography}

\end{document}